\newtheorem{thm}{Theorem}[section]
\newtheorem{cor}{Corollary}
\newtheorem{guess}{Hypothesis}[section]
\theoremstyle{definition}
\newtheorem{Def}[thm]{Definition}
\newtheorem{exmp}{Example}
\theoremstyle{remark}
\newtheorem{rem}{Remark}[section]
\numberwithin{equation}{section}
\newcommand{\ep}{\epsilon}
\providecommand{\norm}[1]{\lVert#1\rVert}
\providecommand{\abs}[1]{\lvert#1\rvert}
\providecommand{\Abs}[1]{\left\lvert#1\right\rvert}
\newcommand {\bee}{\begin{equation*}}
\newcommand {\ee}{\end{equation*}}
\newcommand {\beaa}{\begin{eqnarray*}}
\newcommand {\eaa}{\end{eqnarray*}}
\newcommand {\al} {\alpha}
\newcommand {\la} {\lambda}
\newcommand {\sig} {\sigma}
\newcommand {\fract}[2]{\mbox{${\displaystyle{\frac{#1}{#2}}}$}}
\newcommand {\m}[1]{$\mbox{tr}(#1)$}
\newcommand {\mm}[2]{$\mbox{tr}_{#1}(#2)$}
\newcommand {\be}{\begin{equation}}
\newcommand {\e}{\end{equation}}
\newcommand {\bea}{\begin{eqnarray}}
\newcommand {\ea}{\end{eqnarray}}
\newcommand {\R}{\mathbb{R}}
\newcommand {\C}{\mathbb{C}}
\title{Numerical evaluation of operator determinants}
\author{Issa Karambal}
\begin{document}
\date{}
\maketitle
\begin{abstract}
For any integral operator $K$ in the Schatten--von Neumann classes of compact operators and its approximated operator $K_N~(N\ge1)$ obtained by using for example a quadrature or projection method, we show that the convergence of the approximate $p$-modified Fredholm determinants $\sideset{}{_{Np}}\det(I_N+zK_N)$ to the $p$-modified Fredholm determinants $\sideset{}{_p}\det(I_\mathcal{H}+zK)$  is uniform for all $p\ge1$. As a result, we give the rate of convergences when evaluating at an eigenvalue or at an element of the resolvent set of $K$.
\end{abstract}

\section{Introduction}
Let $K$ be an integral operator defined on a Hilbert space $\mathcal{H}$ and given by
\bee
  Ku(x)=\int_Xk(x,y)u(y)\mathrm{d}y,\quad x\in X 
\ee
where $k(x,y)$ is such that for all $x\in X$,
\be\label{eq:intro2}
k(x,.)u(.)\in L^1(X)\quad\text{and}\quad Ku\in\mathcal{H}. 
\e
Consider the following integral equation
\be\label{eq:intro1}
( I_\mathcal{H}+zK )u=f.
\e
In this paper, we are not considering solving the above integral equation but rather the eigenvalue problem that is, when $f=0$ for all $x\in X$. More precisely, we are interested in locating the eigenvalues of the operator $K$. For such a purpose, the approach used here is the numerical evaluation of the $p$-modified Fredholm determinants that is, $\sideset{}{_p}\det(I_\mathcal{H}+zK)$ for $p\ge1$ \cite{BS05,ISN00}. Indeed, the motivation behind this choice, comes from the fact that the the $p$-modified Fredholm determinants are entire functions whose reciprocal zeros are the eigenvalues of the operator $K$ with algebraic multiplicties equal to the order of the zeros \cite{FS65,ISN00,BS05,AG56,HB69}. The first appearance of these functions, in particular the $2$-modified Fredholm determinants goes back to Hilbert \cite{DH04} followed by Calerman~\cite{TC21}, Smithies \cite{FS65}, Pelmelj~\cite{JP04} and others. Hilbert observed that in order to achieve a convergence of the determinant associated with an $L^2$-kernel, it suffices to set $k(x,x)$ to zero in the formula of the Fredholm determinant. On the other hand, the Fredholm determinant, as introduced by Fredholm \cite{Fred}, is an entire function of the spectral parameter which characterise the solvability of equation \eqref{eq:intro1}, under the assumption that the nonzero function $f$ and the kernel $k$ are continuous. Broadly speaking, the extension of Hilbert's theory to kernels which are not square-integrable has been developed by Gohberg and Krein \cite{GK69} and Dunford and Schwartz \cite{DS63}. A particular attention for $4$-modified Fredholm determinant was conducted by Brascamp \cite{HB69}. Regarding the convergence issue, this also goes back to Hilbert. Indeed by applying the rectangular rule in equation \eqref{eq:intro1}, Hilbert showed  that, for a continuous kernel function $k$, the discrete version of the Fredholm determinant converges uniformly to the continuous one. Bornemann, in \cite{FB10}, extended Hilbert's result to any quadrature rule which converges for continuous functions. However in this paper, we prove the uniform convergence under weaker assumption of the kernel. Precisely, for kernel function $k$ satisfying condition \eqref{eq:intro2} such that the corresponding integral operator $K$ is in the Schatten--von Neumann classes of compact operators. Thus the desired convergence is obtained under the hypothesis that the set $\{K_n\}$ of numerical integral operators approximating $K$ is collectively compact \cite{ANS71}. This is because if the approximation error $(Ku-K_nu)$ is uniform then the sequence of eigenvalues $\{\la_n\}$ of $K_n$ converges uniformly to an eigenvalue $\la$ of $K$ \cite{ANS71,KA67,KA75,GV67}. The paper is organised as follows; in Section 2 we briefly review some properties concerning the $p$-modified Fredholm determinants. In Section 3, we show that the $p$-modified finite dimensional determinants associated with the operator $K_n$ converges uniformly to the $p$-modified Fredholm determinants. As a consequence, we give the rate of convergence when evaluating at an eigenvalue or at an element of the resolvent set of $K$. Finally in Section 4, we present some numerical results that demonstrate the analysis carried out in Section 3.
\section{The $p$-modified Fredholm determinants}\label{sec:1}
We briefly recall some basics that we use in this paper. Let $\mathcal{H}$ denote a separable Hilbert space with an inner product $(.~,~.)$, and $\mathcal{C}(\mathcal{H})$ the set of compact operators acting on $\mathcal{H}$. Then the Schatten--von Neumann classes of compact operators is given by
\bee 
\mathcal{J}_p(\mathcal{H})=\{K\in\mathcal{C}(\mathcal{H}): \norm{K}^p_p<\infty,~\forall p\ge 1\},
\ee
where $\norm{K}^p_p=\mbox{tr}(\abs{K}^p)$ with $\lvert K\rvert=(K^*K)^{1/2}$. Let $A\in\mathcal{J}_p(\mathcal{H})$, then for all $z\in\C$, the Pelmelj--Smithies formula for the $p$-modified Fredholm determinants is given by \cite{ISN00,BS05}
\be \label{eq1:sec1}
\sideset{}{_p}\det(1+z K)=\sum^\infty_{n=0}z^n\al^{(p)}_n(K),
\e
where 
\be \label{eq2:sec1}
\al^{(p)}_n(K)=\fract{1}{n!}\sideset{}{_{\C^n}}\det
\begin{pmatrix}
 \nu^{(p)}_1&n-1&0&\cdots&0\\
\vdots&\vdots&\vdots&\vdots&\vdots\\
\nu^{(p)}_{n-1}&\nu^{(p)}_{n-2}&\cdots&\cdots&1\\
\nu^{(p)}_{n}&\nu^{(p)}_{n-1}&\cdots&\cdots&\nu^{(p)}_1
\end{pmatrix}
\e
with
\be \label{eq3:sec1}
\nu^{(p)}_j=
\begin{cases}
 \mbox{tr}(K^j)&j\ge p\\
0&j\le p-1.
\end{cases}
\e
Moreover, observe that the coefficients $\al^{(p)}_n(A)$ satisfy \cite{FS65} for all $p\ge1$
\be\label{eq2a:sec1}
\al^{(p)}_n(K)=\fract{1}{n}\sum^{n-1}_{j=0}(-1)^{j+1}\al^{(p)}_j(K)\nu^{(p)}_{n-j},\quad\al^{(p)}_0(K)=1.
\e
It is shown in \cite{BS05, ISN00} that the $p$-modified Fredholm determinants $\sideset{}{_p}\det(1+z K)$ are entire functions of $z$ whose order of zeros correspond in algebraic multiplicities to eigenvalues of the compact operator $K$. From Lidskii's theorem \cite{BS05, ISN00}, we have for any operator $K\in\mathcal{J}_1(\mathcal{H})$
\bee
\mbox{tr}(K)=\sum^\infty_{n=1}\la_n,
\ee
where $\la_n$ are eigenvalues of the operator $K$ with possibly accumulation point at zero, and
\bee
\mbox{tr}(K)=\sum^\infty_{n=1}(u_n,Ku_n)
\ee
for any orthonormal basis $\{u_n\}_{n\ge1}$ of $\mathcal{H}$.
This implies that for any $K\in\mathcal{J}_p(\mathcal{H})$, we have 
\bee
\mbox{tr}(K^j)=\sum^\infty_{n=1}\la^j_n
\ee
since $K^j\in\mathcal{J}_1(\mathcal{H})$ for all $j\ge p$ (cf.~\cite[Chap IV.~11]{ISN00}). There are several constructions for the $p$-modified Fredholm determinants but the one we use along with \eqref{eq1:sec1} is given by (cf.~\cite{BS05})
\be\label{eq4a:sec1}
\sideset{}{_p}\det(1+z K)=\prod^\infty_{n=1}\left[(1+z\la_n)\exp\left(\sum^{p-1}_{j=1}z^j(-\la_n)^j/j\right)\right].
\e
Suppose that $K\in\mathcal{J}_{p-1}(\mathcal{H})$ then it follows from \eqref{eq4a:sec1} that (cf.~\cite{ISN00,BS05})
\be\label{eq5:sec1}
 \sideset{}{_p}\det(1+z K)=\sideset{}{_{p-1}}\det(1+z K)\exp\left((-z)^{p-1}\mbox{tr}(K^{p-1})/(p-1)\right).
\e 
Thus if $K\in\mathcal{J}_1(\mathcal{H})$ then from \eqref{eq5:sec1} we have (cf.~\cite{BS05,BS77,ISN00})
\bee
\sideset{}{_2}\det(1+z K)=\sideset{}{_1}\det(1+z K)\exp\left(-z~\mbox{tr}(K)\right).
\ee
By a trace class operator, we refer to any operator $K\in\mathcal{J}_1(\mathcal{H})$, and by a Hilbert--Schmidt any operator $K\in\mathcal{J}_2(\mathcal{H})$. Let $X\subseteq\R$, and consider the integral operator $K$ defined on $\mathcal{H}$ by
\be\label{eq7:sec1}
Ku(x)=\int_X k(x,y)u(y)\mathrm{d}x,~x\in X,
\e
where $k(x,y)\in L^2(X\times X)$. Then $K$ is compact \cite{HH73, FS65}, since $\norm{K}_2=\norm{k}_{L^2(X\times X)}<\infty$. Unfortunately, such a characterisation for trace class operator is not available. Nevertheless, if the integral operator $K$ is of trace class and induced by a continuous kernel then its trace satisfies (cf.~\cite{BS05,BS77,ISN00})
\be\label{eq7a:sec1} 
\mbox{tr}(K)=\int_X k(x,x)\mathrm{d}x.
\e
For such an operator $K$, its determinant $\det_1(1+z K)$ is called the Fredholm determinant.
The Fredholm determinant, as introduced originally in~\cite{Fred}, is an entire function of $z$ which characterise the solvability of the following equation
\bee
u(x)+z Ku(x)=f(x),\quad x\in X,
\ee
where $f$ and the integral kernel $k$ associated with $K$ are both assumed to be continuous in $X$. The latter is given by \cite{FN60,BS05,HH73,FS65}
\bee
d(z)=\sum^\infty_{n=0}z^n\al_n(K),
\ee 
where 
\be\label{eq8:sec1}
\al_n(K)=\fract{1}{n!}\int_{X^n}\det(k(x_p,x_q))^n_{p,q=1}\mathrm{d}x_1\cdots \mathrm{d}x_n.
\e
Note that \eqref{eq8:sec1} makes perfectly sense for any continuous kernel independently of whether $K$ is of trace class or not.
\section{Convergence analysis}
Let $X=[a,b]$ and consider the following eigenvalue problem
\be\label{eq1aa:sec2}
(I_\mathcal{H}+zK)u(x)=0,\quad x\in[a,b],
\e
where $z\in\C$ and $K$, defined by equation \eqref{eq7:sec1}, is an integral operator acting on $\mathcal{H}$. Basically to solve problem \eqref{eq1aa:sec2}, there are two preferred methods; the expansion or projection and the quadrature method (Nystr\"om-type method). But for the sake of simplicity, we rather choose the latter method since its implementation is straightforward and it is efficient for smooth kernels (cf.~\cite{FB10}). However, the results in this paper are also applicable for the projection method. This is under the hypotheses stated in \cite{GV67} which roughly assumes that $K$ is compact and that both $\norm{K-P_nK}$ and $\norm{K_n-P_nK}$ tend to zero as $n\to\infty$, where $P_n$ is a projection operator. The type of kernel function $k(x,y)$ that we consider in this paper are continuous everywhere in the domain except on the diagonal, i.e. the set $\{(x,y)\in[a,b]\times[a,b]\colon x=y\}$, and they satisfy the following hypotheses.
\begin{guess}\label{guess1}
 Write $k_x(y)=k_y(x)=k(x,y)$. Assume that for all $x\in[a,b]$, $k_x\in L^1[a,b]$,
\bee
\sup_{x\in[a,b]}\norm{k_x}_{L^1}<\infty 
\ee
and for $x_1,x_2\in[a,b]$
\be\label{eq1a:sec2}
\norm{k_{x_{1}}-k_{x_{2}}}_{L^1}\to0,~\mbox{as}~x_1\to x_2. 
\e
\end{guess}
Observe that, we  have for $x_1,x_2\in[a,b]$ and using \eqref{eq1a:sec2} that
\bee
 \abs{Ku(x_1)-Ku(x_2)}\le\norm{u}_\mathcal{H}~\norm{k_{x_1}-k_{x_2}}_{L^1}
\ee
Hence, it follows that the operator $K$ maps $\mathcal{H}$ into the space of continuous functions, $C[a,b]$.
Let $B=\{u\in\mathcal{H}:~\norm{u}_\mathcal{H}\le1\}$. Under Hypothesis \ref{guess1}, the integral operator $K$ is compact. Indeed, the set $K(B)\subset C[a,b]$ is bounded in $\mathcal{H}$ since for all $u\in B$
\bee
\norm{Ku}_\mathcal{H}<\infty, 
\ee
which follows from finiteness of the domain and the fact that
\bee
\abs{Ku(x)}\le\sup_{x\in[a,b]}\norm{k_x}_{L^1},~\mbox{for all}~x\in[a,b]~\mbox{and}~u\in B. 
\ee
Moreover, for $x_1,x_2\in[a,b]$
\bee
\abs{Ku(x_1)-Ku(x_2)}\le\norm{k_{x_1}-k_{x_2}}_{L^1}\to0,~\mbox{as}~x_1\to x_2.
\ee
The Arzel\`a--Ascoli lemma tells us that a totally bounded set in $C[a,b]$ is a bounded equicontinuous family of functions. Hence, the set $K(B)$ is totally bounded which implies that $K$ is compact. Henceforth, we assume that the kernel function $k(x,y)$ satisfying Hypothesis \ref{guess1} is given by
\be\label{eq2aa:sec2}
k(x,y)=g(x,y)h(x,y),
\e
where $g(x,y)$ satisfies Hypothesis \ref{guess1} and $h(x,y)$ is continuous everywhere in the domain. Note that with Hypothesis \ref{guess1}, we can consider $g(x,y)=\abs{x-y}^{-\al}$ with $0\le\al<1$.
Assume for fixed $x_n\in[a,b]$, $u\in C[a,b]$ and $N$ sufficiently large that
\be\label{eq2a:sec2}
h(x_n,y)u(y)\approx[h(x_n,y)u(y)]_N=\sum^N_{j=1}a_{nj}P_j(y),
\e
where the $P_j$ are a polynomial basis for $C[a,b]$, preferably orthogonal. Then we define the operator $K_N$ by
\be\label{eq3:sec2}
K_Nu(x)=\sum^N_{j=1}w_j(x)h(x,y_j)u(y_j),
\e
where for fixed $x_n\in[a,b]$
\bee
w_l(x_n)=\sum^{N}_{j=1}\beta_j(x_n)\left(P_j(y_l)\right)^{-1},~l=1,\dots,N
\ee
with
\bee
\beta_j(x_n)=\int^b_ag(x_n,y)P_j(y)\mathrm{d}y.
\ee
The operator $K_N$ maps $\mathcal{H}$ into $C[a,b]$, as well. This is because for all $x\in[a,b]$, the continuity of $w_j$ follows from that of $\beta_j$ since by assumption the function $g$ satisfies equation \eqref{eq1a:sec2}. Moreover, by assumption $h$ is continuous hence the product $w_j(x)h(x,y_j)$ is continuous. Thus for all $u\in B$ and $x\in [a,b]$, we have
\begin{align}
 \abs{K_Nu(x)}&\le\sum^N_{j=1}\abs{w_j(x)h(x,y_j)}\nonumber\\
&\le\max_{x\in[a,b]}\sum^N_{j=1}\abs{w_j(x)h(x,y_j)}\nonumber\\
&\le \sup_{x\in[a,b]}\norm{g_x}_{L^1}\max_{x\in[a,b]}\abs{h(x,y_j)}<\infty,~\forall N\ge1.\label{eq3aa:sec2}
\end{align}
Since if $h(x,y)=1$ and $u(x)=1$ together with the assumption \eqref{eq2a:sec2}, we have as $N$ goes to $\infty$
\bee
\max_{x\in[a,b]}\sum^N_{j=1}\abs{w_j(x)}\to\sup_{x\in[a,b]}\norm{g_x}_{L^1}<\infty
\ee
\begin{rem}
A quadrature method as described in \cite{KA67b} can be applied in order to approximate $h(x_l,y)u(y)$ instead of the projection method \eqref{eq2a:sec2}.
\end{rem}
Given the operator $K_N$, equation~\eqref{eq1aa:sec2} is approximated by
\be\label{eq3a:sec2}
u_N(x)=zK_Nu_N(x)=z\sum^N_{j=1}w_j(x)h(x,y_j)u_N(y_j),\quad x\in [a,b].
\e
The Nystr\"om-type method (cf.~\cite{EJN30}) is then obtained by substituting $x=y_i$ in equation \eqref{eq3a:sec2}. This yields a finite dimensional eigenvalue problem given by
\be\label{eq4:sec2}
u_N(y_i)=zK_Nu_N(y_j)=z\sum^N_{j=1}w_j(y_i)h(y_i,y_j)u_N(y_j),~i=1,\dots,N.
\e
To illustrate the use of the $p$-modified Fredholm determinants, suppose that $K$ is of trace class and $g(x,y)\equiv1$ for all $x,y\in[a,b]$, in \eqref{eq2aa:sec2}. In that case, $w_j(x)=w_j$ is constant for all $x\in[a,b]$ in \eqref{eq3a:sec2}. From equation \eqref{eq4:sec2}, one can then deduce that eigenvalues of the operator $K_N$ are precisely the zeros of the function $d_N(z)$ defined by
\be\label{eq4a:sec2}
d_N(z)=\sideset{}{_{\C^N}}\det(I_N-zK_N).
\e
This follows from the fact that
\bee
 \sideset{}{_{\C^n}}\det(A-\la I_n)=(-\la)^n\sideset{}{_{\C^n}}\det(I_n-zA),
\ee
for a linear operator $A\in \C^{n\times n}$ and $\la\ne 0$, where $z=\la^{-1}$. From arguments given in~\cite[Chap VI]{FS65}, we can write
\be\label{eq5:sec2}
\sideset{}{_{\C^N}}\det(I_N-zK_N)=\sum^\infty_{n=0}(-1)^n\al_{nN}z^n,
\e
where $\al_{nN}$ is defined as in \eqref{eq2:sec1} and \eqref{eq3:sec1} with \m{K^n} replaced by \mm{\C^N}{K^n_N}. In fact, as mentioned in \cite{FB10} and proved in \cite{CDM00}, the series \eqref{eq5:sec2} must terminate at $n=N$. Thus, under the assumption that $K$ is of trace class associated with a continuous kernel $h$, we may write 
\be\label{eq5a:sec2}
d_N(z)=\prod^N_{n=1}\left(1-z\la_{nN}(K_N)\right),
\e
where $\la_{nN}(K_N)$ are eigenvalues of the operator $K_N$, for $n=1,\cdots,N$. Since by assumption the kernel $h$ is continuous everywhere in the domain then \mm{\C^N}{K_N} converges to a finite \m{K} which is defined by equation \eqref{eq7a:sec1}. Hence for bounded $z\in\C$, the determinant given in \eqref{eq5a:sec2} or \eqref{eq5:sec2} converges uniformly to the Fredholm determinant $d(z)$~(cf.~\cite[Theorem~6.1]{FB10}). It then follows that eigenvalues of $K_N$ converge to that of $K$. To generalise Theorem~6.1 of~\cite{FB10} other than for continuous kernel, we assume that the operator $K_N$ for all $N\ge1$ satisfy Anselone's hypotheses of collectively compact operator \cite{ANS71}. That is, a set $\{K_n\}_{n\ge1}$ is called a collectively compact if
\begin{itemize}
 \item [A1.]$K$ and $K_n$ are linear operators on the Banach space $\mathcal{B}$ into itself.
\item [A2.]$K_nu\to Ku$ as $n\to\infty$, for all $u\in \mathcal{B}$ and $n\ge1$.
\item [A3.]The set $\{K_n\}_{n\ge1}$ is collectively compact that is, $\{K_nu:~n\ge1,~\norm{u}\le1\}$ has compact closure in $\mathcal{B}$.
\end{itemize}
Let $\mathcal{L}(\mathcal{H})$ denote the set of bounded linear operators on $\mathcal{H}$.
\begin{Def}[Anselone~\cite{ANS71}]\label{def1}
A set $\mathcal{K}\subset\mathcal{L}(\mathcal{H})$ is collectively compact provided that the set
\bee 
\mathcal{K}B=\{Ku:~K\in\mathcal{H},~u\in B\}
\ee
is relatively compact\footnote[1]{Anselone \cite{ANS71}: Relatively compact, sequentially compact and totally bounded are equivalent in a complete space}. A sequence of operators in $\mathcal{L}(\mathcal{H})$ is collectively compact whenever the corresponding set is. 
\end{Def}
Under Hypostesis \ref{guess1}, the set $\{K_N\}_{N\ge1}$ is collectively compact. Indeed, the operator $K_N$ defined in \eqref{eq3:sec2} satisfies A1. Regarding A2, observe that 
\bee
\abs{Ku(x)-K_Nu(x)}\le M\max_{x\in[a,b]}\int^b_a \abs{h(x,y)u(y)-[h(x,y)u(y)]_N}\mathrm{d}y,
\ee
where $M=\sup_{x\in[a,b]}\norm{g_x}_{L^1}$. Given the assumption \eqref{eq2a:sec2}, we have for sufficiently large $N$ 
\bee 
K_Nu\to Ku.
\ee
For $u\in B$, we have by equation \eqref{eq3aa:sec2} that 
\bee
\norm{K_Nu}\le M\max_{x\in[a,b]}\abs{h(x,y_j)}<\infty,~\forall N\ge1.
\ee
This shows the uniform boundedness of $\{K_N\}$ with 
\bee
\norm{K_N}\le M\max_{x\in[a,b]}\abs{h(x,y_j)}<\infty,~\forall N\ge1.
\ee 
Furthermore, for $x_1,x_2\in[a,b]$, we have
\begin{align*}
\abs{K_Nu(x_1)-K_Nu(x_2)}&\le\sum^N_{j=1}\abs{w_j(x_1)h(x_1,y_j)-w_j(x_2)h(x_2,y_j)}\\
&\le\sum^N_{j=1}\Bigl(\abs{(w_j(x_1)-w_j(x_2))h(x_1,y_j)}\Bigr.\\
&\Bigl.\qquad~+\abs{(h(x_1,y_j)-h(x_2,y_j))w_j(x_2)}\Bigr)\\
&\le\sum^N_{j=1}\Bigl(\norm{h}~\abs{w_j(x_1)-w_j(x_2)}\Bigr.\\
&\Bigl.\qquad~+\norm{w_j}~\abs{h(x_1,y_j)-h(x_2,y_j)}\Bigr)
\end{align*}
Since the functions $w_j$ and $h$ are continuous, it then follows that for all $N\ge1$ 
\bee
\abs{K_Nu(x_1)-K_Nu(x_2)}\to0~\mbox{as}~x_1\to x_2.
\ee
Hence, by the Arzel\`a--Ascoli lemma again, the set $\{K_N(B)\}_{N\ge1}$ is relatively compact. Therefore from Definition \ref{def1}, the set $\{K_N\}_{N\ge1}$ is collectively compact. Having proved the compactness property of the operators $K$ and $\{K_N\}_{N\ge1}$, we are now ready to generalise Theorem 6.1 of \cite{FB10} to integral operators belonging to $\mathcal{J}_p(\mathcal{H})$ for $p\ge1$. Under the assumption that $\{K_N\}_{N\ge1}$, obtained whether by quadrature or by projection method, is collectively compact then for $N$ sufficiently large each eigenvalue $\la$ of $K$ is a limit of a sequence of eigenvalues $\la_N$ of $K_N$ (cf.~\cite{KA67,GV67}). It then follows that the $k$th power of $\la$ is also a limit of the $k$th power of the sequence $\la_N$. Hence for bounded $z$, one gets a uniform convergence since the error in approximating the $p$-modified Fredholm determinants depend on the error in approximating each eigenvalue $\la$ of $K$ which is uniform depending on the approximation error $Ku-K_Nu$. However, there exists some special type of integral operators $K$ that do not fully satisfy Hypothesis~\ref{guess1} but are in $\mathcal{J}_4(\mathcal{H})$ (cf.~\cite{HB69}). Consider for instance, operators with kernel $k(x,y)=H(x,y)\abs{x-y}^{-\al}$ where $H(x,y)$ is assumed to be continuous and $\al=3/2$ (cf.~\cite{HB69}). In that case, the convergence analysis of this paper no longer applies since $w_j(x)$ is not Lebesgue integrable. However one strategy that we could use, as long as we are only interested in eigenvalues, is to compute the Fredholm determinant of the $p$th power of the operator $K$. This because for some $p$, the $p$th power integral operaor $K^p$ is of trace class and its kernel is continuous. Consequently, it may happen that in some case a relationship between the Fredholm determinant corresponding to $K^p$ and the $p$-modified Fredholm determinants associated with the operator $K$ could be established. This is of course will be possible depending on the the properties of $\sideset{}{_p}\det(I_\mathcal{H}+zK)$. In particular, if $K\in\mathcal{J}_2(\mathcal{H})$ or $K\in\mathcal{J}_3(\mathcal{H})(K\in\mathcal{J}_4(\mathcal{H}))$ we have the following theorem.
\begin{thm}\label{theom1a}
Suppose that $K\in\mathcal{J}_2(\mathcal{H})$ then 
\bee
\sideset{}{_1}\det(I_\mathcal{H}-z^2K^2)=\sideset{}{_2}\det(I_\mathcal{H}-zK)\sideset{}{_2}\det(I_\mathcal{H}+zK).
\ee
Moreover, if $\sideset{}{_2}\det(I_\mathcal{H}-zK)=\sideset{}{_2}\det(I_\mathcal{H}+zK)$ then 
\bee
(\sideset{}{_2}\det(I_\mathcal{H}-zK))^2=\sideset{}{_1}\det(I_\mathcal{H}-z^2K^2).
\ee
If $K\in\mathcal{J}_3(\mathcal{H})$ then $K^2\in\mathcal{J}_2(\mathcal{H})$ and we have
\bee
\sideset{}{_2}\det(I_\mathcal{H}-z^2K^2)=\sideset{}{_3}\det(I_\mathcal{H}-zK)\sideset{}{_3}\det(I_\mathcal{H}+zK)
\ee
\end{thm}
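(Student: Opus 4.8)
The plan is to reduce all three identities to the canonical product representation \eqref{eq4a:sec1}, in which the $p$-modified determinant is expressed directly through the eigenvalues $\la_n$ of $K$. The only external facts I would invoke are: (i) that the nonzero eigenvalues of $K^2$ are precisely the $\la_n^2$, repeated with the same algebraic multiplicities (the spectral-mapping property from Riesz--Schauder theory), and (ii) the Schatten embeddings $K\in\mathcal{J}_2(\mathcal{H})\Rightarrow K^2\in\mathcal{J}_1(\mathcal{H})$ and $K\in\mathcal{J}_3(\mathcal{H})\Rightarrow K^2\in\mathcal{J}_{3/2}(\mathcal{H})\subset\mathcal{J}_2(\mathcal{H})$, which follow from the H\"older inequality for Schatten norms and guarantee that every determinant below is well defined with a convergent product.

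For the first identity I would specialise \eqref{eq4a:sec1} to $p=2$, writing $w$ for the spectral parameter, so that $\sideset{}{_2}\det(I_\mathcal{H}+wK)=\prod_n(1+w\la_n)\exp(-w\la_n)$. Evaluating at $w=-z$ and at $w=z$ and multiplying the two convergent products factor by factor, the linear corrections $\exp(z\la_n)$ and $\exp(-z\la_n)$ cancel, leaving $\prod_n(1-z\la_n)(1+z\la_n)=\prod_n(1-z^2\la_n^2)$. On the other side, since the eigenvalues of $K^2$ are the $\la_n^2$ and $K^2\in\mathcal{J}_1(\mathcal{H})$, formula \eqref{eq4a:sec1} with $p=1$ (no exponential factor) gives $\sideset{}{_1}\det(I_\mathcal{H}-z^2K^2)=\prod_n(1-z^2\la_n^2)$, so the two agree. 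The second identity is then immediate: under the stated hypothesis the two factors coincide, their product is the square, and the first identity closes the argument.

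The third identity follows the same pattern with $p=3$, where the Weierstrass correction is quadratic: \eqref{eq4a:sec1} reads $\sideset{}{_3}\det(I_\mathcal{H}+wK)=\prod_n(1+w\la_n)\exp(-w\la_n+w^2\la_n^2/2)$. Evaluating at $w=-z$ and $w=z$ and multiplying, the linear terms cancel while the quadratic terms add, producing $\prod_n(1-z^2\la_n^2)\exp(z^2\la_n^2)$. Since $K^2\in\mathcal{J}_2(\mathcal{H})$ with eigenvalues $\la_n^2$, applying \eqref{eq4a:sec1} with $p=2$ to $-z^2K^2$ yields exactly $\sideset{}{_2}\det(I_\mathcal{H}-z^2K^2)=\prod_n(1-z^2\la_n^2)\exp(z^2\la_n^2)$, which matches the left-hand product.

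I do not expect a serious obstacle; the delicate points are bookkeeping rather than substance. The step deserving the most care is the factor-by-factor multiplication of the two infinite products: I would justify it by noting that each of $\sideset{}{_p}\det(I_\mathcal{H}\mp zK)$ is an individually convergent product, its logarithm converging because the $p$th-order Weierstrass factor makes the $n$th log-term $O(\la_n^p)$, which is summable under the relevant Schatten bound; hence the product of the limits equals the limit of the paired product. The only remaining points to pin down are the multiplicity-preserving identification of the spectrum of $K^2$, cited from the Riesz--Schauder theory already referenced, and the verification that $\sum_n\abs{\la_n}^2<\infty$ (respectively $\sum_n\abs{\la_n}^3<\infty$, whence $\sum_n\abs{\la_n}^4<\infty$ since $\abs{\la_n}\to0$) controls the $K^2$-side products for $p=1$ and $p=2$.
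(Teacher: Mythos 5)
Your proposal is correct and follows essentially the same route as the paper: both reduce all three identities to the product representation \eqref{eq4a:sec1}, factor $(1-z^2\la_n^2)$ as $(1-z\la_n)(1+z\la_n)$ with the Weierstrass exponential corrections cancelling (for $p=2$) or combining (for $p=3$), and both hinge on $K^2$ being trace class (resp.\ Hilbert--Schmidt) with eigenvalues $\la_n^2$. The only cosmetic difference is how the Schatten membership of $K^2$ is justified --- you use H\"older embeddings via $\mathcal{J}_{3/2}$, the paper cites the inequality $\mbox{tr}(\abs{K^2}^2)\le\mbox{tr}(\abs{K}^4)\le(\mbox{tr}(\abs{K}^3))^{4/3}$ --- and your explicit care with the spectral mapping and the factor-by-factor multiplication of convergent products makes points the paper leaves implicit.
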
 
\begin{proof}
Since the product of two Hilbert--Schmidt operators is of trace class, we have
\begin{align*}
\sideset{}{_1}\det(I_\mathcal{H}-z^2K^2)&=\prod^\infty_{n=1}(1-z^2\la^2_n)\\
&=\prod^\infty_{n=1}\Bigl((1-z\la_n)\exp(\la_nz)(1+z\la_n)\exp(-\la_nz)\Bigr)\\
&=\prod^\infty_{n=1}\Bigl((1-z\la_n)\exp(\la_nz)\Bigr)\prod^\infty_{n=1}\Bigl((1+z\la_n)\exp(-\la_nz)\Bigr)\\
&=\sideset{}{_2}\det(I_\mathcal{H}-zK)\sideset{}{_2}\det(I_\mathcal{H}+zK)\\
&=(\sideset{}{_2}\det(I_\mathcal{H}-zK))^2
\end{align*}
Now if $K\in\mathcal{J}_3(\mathcal{H})$ then observe that $K^2$ satisfies (cf. \cite[Theorem 11.2, Chap IV]{ISN00})
\be\label{eq6:sec2}
\text{tr}(\abs{K^2}^2)\le\text{tr}(\abs{K}^4)\le(\text{tr}(\abs{K}^3))^{4/3}<\infty.
\e
Hence $K^2$ is Hilbert--Schmidt. It then follows that
\begin{align*}
\sideset{}{_2}\det(I_\mathcal{H}-z^2K^2)&=\prod^\infty_{n=1}(1-z^2\la^2_n)\exp(z^2\la^2_n)\\
&=\prod^\infty_{n=1}\Bigl((1-z\la_n)\exp(\la_nz+z^2\la^2_n/2)\Bigr.\\
&\qquad\quad\times\Bigl.(1+z\la_n)\exp(-\la_nz+z^2\la^2_n/2)\Bigr)\\
&=\sideset{}{_3}\det(I_\mathcal{H}-zK)\sideset{}{_3}\det(I_\mathcal{H}+zK)
\end{align*}
\end{proof}
\begin{rem}
 If $K\in\mathcal{J}_4(\mathcal{H})$ then from \eqref{eq6:sec2}, we have
 \bee
\sideset{}{_2}\det(I_\mathcal{H}-z^2K^2)=\sideset{}{_4}\det(I_\mathcal{H}-zK)\sideset{}{_4}\det(I_\mathcal{H}+zK).
\ee
And if $\sideset{}{_4}\det(I_\mathcal{H}-zK)=\sideset{}{_4}\det(I_\mathcal{H}+zK)$ then we have 
\bee 
\sideset{}{_2}\det(I_\mathcal{H}-z^2K^2)=(\sideset{}{_4}\det(I_\mathcal{H}-zK))^2.
\ee
More generally for $K\in\mathcal{J}_{2p}(\mathcal{H})$, we have for $\text{Re}(z)>0$ and $p=3,4,\dots$ 
\bee
\sideset{}{_{2p}}\det(I_\mathcal{H}-zK)\sideset{}{_{2p}}\det(I_\mathcal{H}+K)=c_{p}(z)\sideset{}{_{\lceil p/2\rceil}}\det(I_\mathcal{H}-z^4K^4),
\ee
where $c_{p}(z)=1/\sideset{}{_{p}}\det(I_\mathcal{H}+z^2K^2)$. The above equality holds also for $K\in\mathcal{J}_{2p-1}(\mathcal{H})$.
\end{rem}
In what follows, we define for all $p\ge1$, $d_p(z)\coloneqq\sideset{}{_p}\det(1+zK)$ and
\bee
d_{Np}(z)\coloneqq\sideset{}{_{p\C^N}}\det(I_N+zK_N)=\sum^\infty_{k=0}\al^{(p)}_{kN}(K_N)z^k.
\ee
\begin{thm}\label{theom1}
 Suppose that $K$ given in \eqref{eq7:sec1} is in $\mathcal{J}_p(\mathcal{H})$ and $\{K_N\}_{N\ge1}$ is collectively compact with $K_N$ defined by \eqref{eq3:sec2}. Then for $p\ge 1$, we have
\bee
d_{Np}(z)\to d_p(z) 
\ee
converges uniformly for all bounded $z$ as $N\to \infty$.
\end{thm}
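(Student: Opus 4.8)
The plan is to work directly with the Plemelj--Smithies series \eqref{eq1:sec1} for both determinants. Writing
\bee
d_p(z)-d_{Np}(z)=\sum_{n=0}^\infty\bigl(\al^{(p)}_n(K)-\al^{(p)}_{nN}(K_N)\bigr)z^n,
\ee
the task reduces to controlling the differences of the Taylor coefficients together with the tails of the two series, uniformly in $z$ on each disc $\abs{z}\le R$ and uniformly in $N$. Note that I would \emph{not} try to prove $K_N\to K$ in the Schatten norm and invoke continuity of $K\mapsto\sideset{}{_p}\det(1+zK)$ on $\mathcal{J}_p(\mathcal{H})$: a Nystr\"om/collectively compact approximation typically does not converge in $\norm{\cdot}_p$, which is exactly why the spectral (trace) route below, and its uniformity, is the substantive point.

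First I would establish coefficientwise convergence, $\al^{(p)}_{nN}(K_N)\to\al^{(p)}_n(K)$ as $N\to\infty$. By \eqref{eq2:sec1}--\eqref{eq3:sec1} each $\al^{(p)}_n$ is a fixed universal polynomial (a determinant) in the power traces $\nu^{(p)}_j=\mbox{tr}(K^j)$, $p\le j\le n$, and likewise $\al^{(p)}_{nN}$ in $\mbox{tr}_{\C^N}(K_N^j)$. Hence it is enough to prove the trace convergence $\mbox{tr}_{\C^N}(K_N^j)\to\mbox{tr}(K^j)$ for every fixed $j\ge p$. By Lidskii's theorem both are sums of $j$th powers of eigenvalues, $\sum_k\la^j_{kN}$ and $\sum_k\la^j_k$, and collective compactness guarantees (cf.~\cite{ANS71,KA67,GV67}) that each eigenvalue $\la_{kN}\to\la_k$, so $\la^j_{kN}\to\la^j_k$ for every $k$.

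Second, to pass from this termwise convergence to convergence of the full traces, and to control both tails, I would use a Schatten estimate. For $j\ge p$, Weyl's inequality and $\ell^p\hookrightarrow\ell^j$ give $\abs{\mbox{tr}(K^j)}\le\sum_k\abs{\la_k}^j\le\norm{K}_p^{\,j}$, and identically $\abs{\mbox{tr}_{\C^N}(K_N^j)}\le\norm{K_N}_p^{\,j}$. Feeding these into the recurrence \eqref{eq2a:sec1} yields a Carleman-type bound $\abs{\al^{(p)}_n(K)}\le a_n\norm{K}_p^{\,n}$ with operator-independent coefficients $a_n$ satisfying $\sum_n a_n r^n<\infty$ for every $r$ (reflecting that $d_p$ is entire), and likewise $\abs{\al^{(p)}_{nN}(K_N)}\le a_n\norm{K_N}_p^{\,n}$. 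Provided $M_p\coloneqq\sup_N\norm{K_N}_p<\infty$, the tail $\sum_{n>n_0}\bigl(\abs{\al^{(p)}_n(K)}+\abs{\al^{(p)}_{nN}(K_N)}\bigr)R^n$ is dominated by $2\sum_{n>n_0}a_n(\max\{\norm{K}_p,M_p\}R)^n$, which is $<\ep$ for $n_0$ large, uniformly in $N$. The finite head $\sum_{n\le n_0}\abs{\al^{(p)}_n(K)-\al^{(p)}_{nN}(K_N)}R^n$ then tends to $0$ by the coefficient convergence of the previous step, and combining the two gives uniform convergence on $\abs{z}\le R$.

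The main obstacle is precisely the uniform bound $\sup_N\norm{K_N}_p<\infty$ and the attendant upgrade of pointwise eigenvalue convergence to genuine trace convergence $\sum_k\la^j_{kN}\to\sum_k\la^j_k$. Collective compactness only supplies uniform boundedness in the operator norm and strong convergence $K_Nu\to Ku$; what is really needed is a \emph{uniformly} $\ell^p$-summable domination of the eigenvalues (equivalently, uniform Schatten-$p$ control with uniformly small tails), since it is this uniform integrability that justifies the limit inside the eigenvalue sum. Once such uniform $\ell^p$ control is in hand, the remaining estimates are the routine Carleman bounds indicated above.
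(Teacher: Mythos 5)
Your route is the same as the paper's: both expand $d_p$ and $d_{Np}$ in the Plemelj--Smithies series \eqref{eq1:sec1}, reduce convergence of the coefficients $\al^{(p)}_n$ to convergence of the power traces via \eqref{eq2:sec1}--\eqref{eq2a:sec1}, obtain $\mbox{tr}_{\C^N}(K^j_N)\to\mbox{tr}(K^j)$ from the eigenvalue convergence supplied by collective compactness (citing Anselone/Atkinson/Vainikko), and control eigenvalue sums by Weyl's inequality plus the embedding $\mathcal{J}_p(\mathcal{H})\subset\mathcal{J}_j(\mathcal{H})$ for $j\ge p$. Concretely, the paper bounds $\abs{d_p(z)-d_{Np}(z)}\le\sum_j\abs{\al^{(p)}_j(K)-\al^{(p)}_{jN}(K_N)}M^j$, splits each trace error into the paired head $\abs{\sum_{n\le N}\la^j_n-\sum_{n\le N}\la^j_{nN}}$ plus the tail $\abs{\sum_{n>N}\la^j_n}$, kills the tail by $\ell^p$-summability of $\{\la_n\}$, and estimates the head using $\abs{\la_n-\la_{nN}}\le\ep$ together with $\abs{\la^j_n-\la^j_{nN}}\le j\abs{\la_n}^j\ep$ as in \eqref{eq7a:sec2}--\eqref{eq7c:sec2}.

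That said, your proposal is incomplete exactly where you say it is, and the gap is genuine: nothing in collective compactness (uniform operator-norm boundedness plus strong convergence) yields $\sup_N\norm{K_N}_p<\infty$, nor the uniform $\ell^p$-domination of the approximate eigenvalues $\{\la_{nN}\}$ that you need both to pass the limit through the infinite eigenvalue sums and to make your Carleman tail estimate uniform in $N$. You should know, however, that the paper does not close this gap either: it is buried in step \eqref{eq7b:sec2}, the ``approximation'' $\sum^{j-1}_{m=0}\la^m_n\la^{j-m}_{nN}\approx j\abs{\la_n}^j$, which is precisely the unjustified assertion that $\abs{\la_{nN}}$ is dominated by $\abs{\la_n}$ uniformly in $n\le N$ --- a disguised form of your uniform Schatten-$p$ bound; moreover the paper's concluding sentence, which passes from the per-$j$ trace bound to $\abs{d_p(z)-d_{Np}(z)}\le\ep$, silently requires exactly the uniform-in-$N$ summability over $j$ that your coefficient bounds $a_n$ are designed to supply, but it is never established there. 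Note also that the paper's head estimate invokes $\abs{\la_n-\la_{nN}}\le\ep$ \emph{simultaneously} for all $n\in\{1,\dots,N\}$, a uniformity over a growing set of eigenvalues accumulating at zero, whereas the cited results give convergence eigenvalue by eigenvalue. So: same strategy, your analysis of where the real difficulty sits is accurate, and the hole you flag is one the paper's own proof shares; closing it would require proving uniform Schatten-$p$ control of the Nystr\"om family $\{K_N\}$, which neither your proposal nor the paper carries out.
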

\begin{proof}
Let $z\in\C$ be bounded by $M>0$. Then we have
\bee 
\abs{d_p(z)-d_{Np}(z)}\le \sum^\infty_{j=0}\abs{\al^{(p)}_j(K)-\al^{(p)}_{jN}(K_N)}~M^j.
\ee
Observe that from \eqref{eq2a:sec1} and \eqref{eq3:sec1}, the error $\abs{\al^{(p)}_j(K)-\al^{(p)}_{jN}(K_N)}$ can be deduced for all $j\ge p$ from 
\begin{align*}
 \abs{\mbox{tr}(K^j)-\mbox{tr}_{\C^N}(K^j_N)}&=\Abs{\sum^\infty_{n=1}\la^j_n-\sum^N_{n=1}\la^j_{nN}}\\
&\le\Abs{\sum^N_{n=1}\la^j_n-\sum^N_{n=1}\la^j_{nN}}+\Abs{\sum^\infty_{n=N+1}\la_n^j},
\end{align*}
where $\la_n$ and $\la_{nN}$ are eigenvalues of the operators $K$ and $K_N$, respectively.\\
As $N$ goes to infinity, we have
\bee
\sum^\infty_{n=N+1}\abs{\la_n}^j\to0. 
\ee
Therefore
\bee
\Abs{\sum^\infty_{n=N+1}\la^j_n}\le \sum^\infty_{n=N+1}\abs{\la_n}^j\to0.
\ee
On the other hand, since $\{K_N\}_{N\ge1}$ is collectively compact, we have for some $N$ sufficiently large, $\ep>0$ and for each $n\in\{1,\dots,N\}$ (cf.~\cite{KA67,GV67} and \cite[Theorem 4.8]{ANS71})
\bee
\abs{\la_n-\la_{nN}}\le\ep.
\ee
Observe that 
\begin{align}
\abs{\la^j_n-\la^j_{nN}}&=\abs{\la_n-\la_{nN}}\Abs{\sum^{j-1}_{m=0}\la^m_n\la^{j-m}_{nN}}\nonumber\\
&\le \abs{\la_n-\la_{nN}}\sum^{j-1}_{m=0}\abs{\la^m_n\la^{j-m}_{nN}}\label{eq7a:sec2}\\
&\le j\abs{\la_n}^j\ep\label{eq7c:sec2}.
\end{align}
This is because for some $N$ sufficiently large, we have
\be\label{eq7b:sec2}
\sum^{j-1}_{m=0}\la^m_n\la^{j-m}_{nN}\approx j\abs{\la_n}^j.
\e
It then follows from \eqref{eq7c:sec2} and  the continuous embedding of $\mathcal{J}_p(\mathcal{H})\subset\mathcal{J}_q(\mathcal{H})$ for $p<q$ that
\bee
\sum^N_{n=1}\abs{\la^j_n-\la^j_{nN}}\le\ep j\sum^N_{n=1}\abs{\la_n}^j\le \ep j\norm{K}^p_p.
\ee
Thus for some $N$ large and $\ep$ chosen arbitrarily small, we have
\bee
\abs{\mbox{tr}(K^j)-\mbox{tr}_{\C^N}(K^j_N)}\le\ep.
\ee
Consequently, we have $\abs{d_p(z)-d_{Np}(z)}\le\ep$ as $N\to\infty$.
\end{proof}

\begin{rem}
 Suppose that $k(x,y)$ is continuous on $[a,b]^2$ then (cf.~\cite{HH73}) we have
\be\label{eq8:sec2}
\mbox{tr}(K^n)=\int^b_ak_n(x,x)\mathrm{d}x,
\e
where $k_1(x,y)=k(x,y)$ for all $x,y\in[a,b]$ and for $n\ge2$
\bee
k_n(x,x)=\int_{[a,b]^{n-1}}k(x,x_{n-1})k_{n-1}(x_{n-1},x)\mathrm{d}x_{n-1}.
\ee
Note that in the process of computing the finite dimensional determinant \eqref{eq5:sec2}, we essentially approximate the multiple integrals~\eqref{eq8:sec2} by a product quadrature rule $Q_n$, that is
\begin{align*}
Q_n(\mbox{tr}(K^n))&=\mbox{tr}_{\C^N}(K^n_N)\\
&= \sum^N_{j_1,\dotso,j_n=1}w_{j_1}\cdots w_{j_n}k(x_{j_n},x_{j_1})k(x_{j_1},x_{j_2})\cdots k(x_{j_{n-1}},x_{j_n}).
\end{align*}
Therefore the error becomes
\be\label{eq8a:sec2}
\abs{\mbox{tr}(K^n)-\mbox{tr}_{\C^N}(K^n_N)}=\abs{\mbox{tr}(K^n)-Q_n(\mbox{tr}(K^n))},
\e
which could be related to the quadrature error $\abs{Ku(x)-K_Nu(x)}$ for $x\in[a,b]$. Hence, if we assume that the kernel $k(x,y)$ is smooth then one might expect an exponential convergence of \eqref{eq8a:sec2} as mentioned in~\cite{FB10}.
\end{rem}
Under the assumption that $\{K_N\}_{N\ge1}$ is a collectively compact operator and exploiting the results of \cite{KA75, KA67, GV67}, we can now estimate the rate of convergence in evaluating the determinant $d_{Np}(z)$ at $z=\la^{-1}$, where $\la\in\sig(K)$, the spectrum of $K$. In fact, this estimate depends essentially on the error in approximating the integral operator $K$ in \eqref{eq7:sec1}. In what follows, $c$ is always a positive constant.
\begin{thm}\label{theom2}
 Assume that $\{K_N\}_{N\ge1}$ is collectively compact. Let $\la_{n_0}\ne0$ and $\la_{n_0N}$ denote the eigenvalues of $K\in\mathcal{J}_p(\mathcal{H})$ and $K_N$, respectively. Then for some $N$ sufficiently large and for $p\ge1$, we have
\be\label{eq9:sec2}
\abs{d_p(z_{n_0})-d_{Np}(z_{n_0})}\le c\max\{\norm{Ku_i-K_Nu_i}^{1/l}:~1\le i\le m\},
\e
where $z_{n_0}=\la^{-1}_{n_0},~\{u_1,\cdots,u_m\}$ is a basis for $\mathrm{Ker}(\la_{n_0}-K)^l$ and $m$ and $l$ are the multiplicity and the index of $\la_{n_0}$, respectively.
\end{thm}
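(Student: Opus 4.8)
The plan is to reduce the determinant error at the evaluation point $z_{n_0}=\la_{n_0}^{-1}$ to the eigenvalue approximation error, and then to invoke the spectral approximation theory for collectively compact operators to bound that error in terms of the operator approximation error $\norm{Ku_i-K_Nu_i}$. Concretely, I would start from the bound already obtained inside the proof of Theorem~\ref{theom1},
\bee
\abs{d_p(z_{n_0})-d_{Np}(z_{n_0})}\le\sum_{j\ge0}\abs{\al^{(p)}_j(K)-\al^{(p)}_{jN}(K_N)}\,\abs{z_{n_0}}^j,
\ee
and trace each coefficient error back, via \eqref{eq2a:sec1}--\eqref{eq3:sec1}, to the trace errors $\abs{\mbox{tr}(K^j)-\mbox{tr}_{\C^N}(K^j_N)}$, which in turn reduce to $\sum_n\abs{\la_n^j-\la_{nN}^j}$ as in Theorem~\ref{theom1}. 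The whole estimate is therefore controlled by $\max_n\abs{\la_n-\la_{nN}}$, so the crux is to bound the eigenvalue error $\abs{\la_{n_0}-\la_{n_0N}}$ by the right-hand side of \eqref{eq9:sec2}.

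Next I would bring in the quantitative spectral convergence results of Osborn and Vainikko \cite{KA75,GV67} for collectively compact approximations. The key fact is that when $\la_{n_0}$ is an eigenvalue of $K$ of algebraic multiplicity $m$ and ascent (index) $l$ — so that the generalised eigenspace is $\mathrm{Ker}(\la_{n_0}-K)^l$ with basis $\{u_1,\dots,u_m\}$ — the approximating eigenvalues $\la_{n_0N}$ satisfy an estimate of the form
\be\label{eq:osborn}
\abs{\la_{n_0}-\la_{n_0N}}\le c\,\Bigl(\max_{1\le i\le m}\norm{(K-K_N)u_i}\Bigr)^{1/l},
\e
the $1/l$-th root appearing precisely because a defective eigenvalue (nontrivial Jordan structure) perturbs like the $l$-th root of the perturbation. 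I would state this as the quoted input from \cite{KA75,GV67,KA67} and verify that its hypotheses hold here: $\{K_N\}$ is collectively compact (established earlier in Section~3) and $K_Nu\to Ku$ pointwise, which is exactly what the Osborn/Vainikko framework requires. Combining \eqref{eq:osborn} with the coefficient estimates from Theorem~\ref{theom1} then yields \eqref{eq9:sec2}, after absorbing the convergent series $\sum_j j\,\norm{K}_p^p\,\abs{z_{n_0}}^j$ and the constants into a single $c$.

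The main obstacle, and the step requiring the most care, is the passage through the $1/l$ exponent: one must correctly identify $l$ as the ascent of $\la_{n_0}$ and cite the perturbation bound in the defective case, rather than the simpler Lipschitz bound $\abs{\la_{n_0}-\la_{n_0N}}\le c\,\max_i\norm{(K-K_N)u_i}$ that holds only when $\la_{n_0}$ is semisimple ($l=1$). A secondary technical point is justifying the interchange of the eigenvalue estimate with the infinite sum defining $d_p$; here I would use the continuous embedding $\mathcal{J}_p\subset\mathcal{J}_q$ and the entirety of $d_p(z)$ (uniform convergence of the defining series on bounded sets, from Theorem~\ref{theom1}) to guarantee that summing the per-eigenvalue bounds produces a finite constant independent of $N$. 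With those two points handled, the estimate \eqref{eq9:sec2} follows directly.
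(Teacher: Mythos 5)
Your proposal does not prove the stated bound; it proves a different (weaker) statement, essentially the one in the paper's \emph{next} theorem. The route through the series of coefficient errors forces you to estimate the trace errors $\abs{\mbox{tr}(K^j)-\mbox{tr}_{\C^N}(K^j_N)}$, and these involve the differences $\abs{\la_n-\la_{nN}}$ for \emph{every} eigenvalue $\la_n$, not just $\la_{n_0}$. When you then invoke the Atkinson/Vainikko bound, each $\abs{\la_n-\la_{nN}}$ is controlled by the eigenfunctions, multiplicity $m_n$ and index $l_n$ of \emph{its own} eigenvalue $\la_n$; the resulting estimate is of the form $c\,\beta(N,\max_n l_n)\norm{K}^p_p$, i.e.\ exactly \eqref{eq9b:sec2}, the bound the paper proves for $z$ in the resolvent set. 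It cannot be collapsed to the right-hand side of \eqref{eq9:sec2}, which involves only the basis $\{u_1,\dots,u_m\}$ of $\mathrm{Ker}(\la_{n_0}-K)^l$ and the index $l$ of the single eigenvalue $\la_{n_0}$. There is also a secondary defect: your series route leaves the tail $\Abs{\sum_{n>N}\la^j_n}$, which tends to zero but carries no rate, so it cannot simply be absorbed into a rate estimate.

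The idea you are missing is that $z_{n_0}=\la^{-1}_{n_0}$ is a \emph{zero} of $d_p$, so $d_p(z_{n_0})=0$ and the error is just $\abs{d_{Np}(z_{n_0})}$ — no infinite series and no trace estimates are needed. The paper then uses the finite product representation of the discrete determinant (for $p=1$),
\bee
\abs{d_{N}(z_{n_0})}=\Abs{\prod^N_{n=1}\bigl(1-\la^{-1}_{n_0}\la_{nN}\bigr)}
=\Abs{\la_{n_0}}^{-N}\prod^N_{n=1}\abs{\la_{n_0}-\la_{nN}},
\ee
and applies the collectively compact eigenvalue estimate of \cite{KA75,GV67} to the \emph{single} factor $n=n_0$, which is precisely where $\abs{\la_{n_0}-\la_{n_0N}}\le c\max\{\norm{Ku_i-K_Nu_i}^{1/l}:1\le i\le m\}$ enters with only the data of $\la_{n_0}$; all the remaining factors are bounded and absorbed into the constant $c$. (For $p\ge2$ the same argument runs through the product formula \eqref{eq4a:sec1} with the exponential factors.) Your instinct about where the exponent $1/l$ comes from — the defective, non-semisimple case — is correct, but without the observation $d_p(z_{n_0})=0$ and the product structure isolating the $n_0$-th factor, the claimed dependence on $\la_{n_0}$ alone is out of reach.
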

\begin{proof}
For simplicity, we consider the case $p=1$. However the following proof holds for $p\ge2$, where we make use of the definition of the determinant given in \eqref{eq4a:sec1} for finite dimensional matrices. Evaluating the Fredholm determinant at its zero $z_{n_0}=\la^{-1}_{n_0}$, for some $1\le n_0\le N$ and $\la_{n_0}\ne0$, we have 
\begin{align}
 \abs{d(z_{n_0})-d_{N}(z_{n_0})}&=\abs{d_{N}(z_{n_0})}\nonumber\\
&=\Abs{\prod^N_{n=1}(1-\la^{-1}_{n_0}\la_{nN})}\nonumber\\
&=\Abs{\prod^N_{n=1}\left[\la^{-1}_{n_0}\left(\la_{n_0}-\la_{nN}\right)\right]}\nonumber\\
&=\Abs{\la_{n_0}}^{-N}\prod^N_{n=1}\abs{\la_{n_0}-\la_{nN}}\label{eq9a:sec2}.
\end{align}
Since $\{K_N\}_{N\ge1}$ is a collectively compact operator, it then follows from~\cite{KA75} and \cite[Theorem 3]{GV67} when $n=n_0$ in the right-hand  side of \eqref{eq9a:sec2} that
\bee
 \abs{d(z_{n_0})-d_{N}(z_{n_0})}\le c\max\{\norm{Ku_i-K_Nu_i}^{1/l}: 1\le i\le m\},
\ee
for some $N$ sufficiently large.
\end{proof}
For the projection method, it suffices to replace the right-hand side of \eqref{eq9:sec2} by the error bound in \cite[Theorem~3]{GV67}. 
\begin{rem}
 Under normalising the eigenfunction $u_i$, the set $\{u_1,\cdots,u_m\}\subset B$. On the other hand, since the set $\{K_N(B)\}_{N\ge1}$ and $K(B)$ are totally bounded and mapping $\mathcal{H}$ into $C[a,b]$, then, $K_Nu_i$ converge uniformly to $Ku_i$ \cite[Proposition~1.7]{ANS71}, that is as $N\to\infty$
 \bee
 \norm{Ku_i-K_Nu_i}\to0.
 \ee
\end{rem}
\begin{thm}
 Suppose $K$ and $\{K_N\}_{N\ge1}$ are as in Theorem \ref{theom2}. Then for some $N$ sufficiently large and $z$, an element of the resolvent set $\rho(K)$, we have
\be\label{eq9b:sec2}
\abs{d_p(z)-d_{Np}(z)}\le c\beta(N,l_{n_0})\norm{K}^p_p,
\e
where
\be\label{eq9c:sec2}
\beta(N,l_{n_0})=\max\{\norm{Ku_i-K_Nu_i}^{1/l_{n_0}}:1\le i\le m_{n_0}\}
\e
and $l_{n_0}=\max\{l_n\colon n=1,\dots,N\}$.
\end{thm}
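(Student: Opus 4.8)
The plan is to follow the structure of the proof of Theorem~\ref{theom1} and turn its qualitative bound into a quantitative one by inserting the eigenvalue error estimate furnished by Theorem~\ref{theom2}. Since $z\in\rho(K)$, neither $d_p(z)$ nor $d_{Np}(z)$ vanishes, so I cannot argue as in Theorem~\ref{theom2} via $d(z_{n_0})=0$; instead I would start from the Pelmelj--Smithies series and write
\bee
\abs{d_p(z)-d_{Np}(z)}\le\sum^\infty_{j=0}\abs{\al^{(p)}_j(K)-\al^{(p)}_{jN}(K_N)}\abs{z}^j,
\ee
recalling from \eqref{eq2a:sec1}--\eqref{eq3:sec1} that each coefficient difference is controlled by the trace differences $\abs{\mbox{tr}(K^j)-\mbox{tr}_{\C^N}(K^j_N)}$ for $j\ge p$.

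First I would reproduce the splitting from the proof of Theorem~\ref{theom1}, namely
\bee
\abs{\mbox{tr}(K^j)-\mbox{tr}_{\C^N}(K^j_N)}\le\sum^N_{n=1}\abs{\la^j_n-\la^j_{nN}}+\sum^\infty_{n=N+1}\abs{\la_n}^j,
\ee
where the tail tends to $0$ as $N\to\infty$, and bound each main term by $\abs{\la^j_n-\la^j_{nN}}\le j\abs{\la_n}^j\abs{\la_n-\la_{nN}}$ as in \eqref{eq7a:sec2}--\eqref{eq7c:sec2}. The essential new ingredient is to replace the abstract $\ep$ used there by the explicit eigenvalue error bound of Theorem~\ref{theom2}: for the collectively compact family $\{K_N\}$ the results of \cite{KA75,GV67} give $\abs{\la_n-\la_{nN}}\le c\max\{\norm{Ku_i-K_Nu_i}^{1/l_n}\}$ for each eigenvalue $\la_n$ of index $l_n$ and multiplicity $m_n$.

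To obtain a bound valid simultaneously for all $n=1,\dots,N$ I would uniformise the exponent using $l_{n_0}=\max\{l_n\colon n=1,\dots,N\}$. For $N$ large the uniform convergence $\norm{Ku_i-K_Nu_i}\to0$ (from collective compactness, cf.~\cite[Proposition~1.7]{ANS71}) makes every such norm less than $1$, so that $\norm{Ku_i-K_Nu_i}^{1/l_n}\le\norm{Ku_i-K_Nu_i}^{1/l_{n_0}}$ whenever $l_n\le l_{n_0}$; hence $\abs{\la_n-\la_{nN}}\le c\,\beta(N,l_{n_0})$ for every $n$. Inserting this and summing via the continuous embedding estimate $\sum^N_{n=1}\abs{\la_n}^j\le\norm{K}^p_p$ (for $j\ge p$) gives $\sum^N_{n=1}\abs{\la^j_n-\la^j_{nN}}\le c\,\beta(N,l_{n_0})\,j\,\norm{K}^p_p$, and therefore a trace-difference bound of the same shape.

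Finally I would feed these trace estimates back into the coefficient differences and sum the resulting series in $j$ against $\abs{z}^j$; the combinatorial factors from \eqref{eq2:sec1} together with the powers $\abs{z}^j$ converge (this is exactly the entireness of the determinant already exploited in Theorem~\ref{theom1}), so the whole sum can be absorbed into a single constant $c$, yielding $\abs{d_p(z)-d_{Np}(z)}\le c\,\beta(N,l_{n_0})\norm{K}^p_p$. The main obstacle I anticipate is precisely the uniformisation step: the estimates of \cite{GV67,KA75} are attached to a single eigenvalue with its own invariant subspace and index, and one must argue carefully that the worst index $l_{n_0}$, together with the uniform (collectively compact) control of $\norm{Ku_i-K_Nu_i}$, really dominates the error for all eigenvalues at once; managing the interplay between this uniform exponent, the vanishing of the tail, and the convergence of the series in $j$ is where the genuine work lies.
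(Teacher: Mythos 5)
Your proposal follows essentially the same route as the paper's proof: bound the trace differences $\abs{\mbox{tr}(K^k)-\mbox{tr}_{\C^N}(K^k_N)}$ by applying the eigenvalue error estimates of \cite{KA75,GV67} to each $\la_n$ via \eqref{eq7a:sec2}--\eqref{eq7b:sec2}, uniformise the exponents with $l_{n_0}$ so that every eigenvalue error is dominated by $\beta(N,l_{n_0})$, sum using $\sum^N_{n=1}\abs{\la_n}^k\le\norm{K}^p_p$ to get $\abs{\mbox{tr}(K^k)-\mbox{tr}_{\C^N}(K^k_N)}\le ck\,\beta(N,l_{n_0})\norm{K}^p_p$, and feed this back into the Pelmelj--Smithies series. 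You in fact supply more detail than the paper at the uniformisation step (the paper simply asserts $\max\{\norm{Ku_i-K_Nu_i}^{1/l_n}\colon 1\le i\le m_n\}\le\beta(N,l_{n_0})$, whereas you justify the exponent comparison via the norms being below one for large $N$, and you correctly flag the remaining cross-eigenvalue domination as the delicate point), so the proposal matches the published argument.
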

\begin{proof}
Suppose that $z\in\rho(K)$. Then combining equation \eqref{eq7a:sec2} and \eqref{eq7b:sec2}, and the error estimate of \cite{KA75} for each $\la_n,~n=1,\cdots,N$, we have, for all $k\ge p$ and for some $N$ sufficiently large, 
\bee
\abs{\mbox{tr}(K^k)-\mbox{tr}_{\C^N}(K^k_N)}\le ck\sum^N_{n=1}\abs{\la_n}^{k}\max\{\norm{Ku_i-K_Nu_i}^{1/l_n}: 1\le i\le m_n\}.
\ee
Given $l_{n_0}$, we have for all $n=1,\dots,N$
\bee
\max\{\norm{Ku_i-K_Nu_i}^{1/l_n}\colon 1\le i\le m_n\}\le\beta(N,l_{n_0}), 
\ee
with $\beta(N,l_{n_0})$ given in \eqref{eq9c:sec2}. It then follows for sufficiently large $N$ that 
\bee 
\abs{\mbox{tr}(K^k)-\mbox{tr}_{\C^N}(K^k_N)}\le ck\beta(N,l_{n_0})\norm{K}^p_p.
\ee
Hence \eqref{eq9b:sec2} follows.
\end{proof}
Observe from equation \eqref{eq9b:sec2} and equation \eqref{eq9:sec2} that for a given eigenvalue $z^{-1}_0$ of index $l\le l_{n_0}$, we have for some $N$ sufficiently large and $z\in\rho(K)$, that
\be\label{eq10b:sec2}
\abs{d_p(z_0)-d_{Np}(z_0)}\le\abs{d_p(z)-d_{Np}(z)}. 
\e
Thus, we note from \eqref{eq9:sec2} and \eqref{eq9b:sec2} that the rate of convergence is controlled by the index of the eigenvalues. 
\begin{rem}
 Given the above inequality \eqref{eq10b:sec2}, we can observe that numerical computation of the $p$-modified Fredholm determinants is similar to the numerical interpolation in which the interpolation points are the eigenvalues $z^{-1}_n$ of $K$.
\end{rem}

\begin{cor}\label{cor1}
Let the integral operator $K\in\mathcal{J}_p(\mathcal{H})$ and the set $\{K_N\}_{N\ge1}$ is a collectively compact operator. Assume that eigenvalues $\la_n$ of the operator $K$ are simple, for all $n\ge1$, and that the associated eigenvector $u_{1}$ satisfies $\mathrm{Im}(u_{1})=0~(\mathrm{Re}(u_{1})=0)$. Then for some $N$ sufficiently large and for all $z\in\C~\mbox{and}~c>0$, we have
\be\label{eq10a:sec2}
\abs{d_p(z)-d_{Np}(z)}\le c\norm{Ku_{1}-K_Nu_{1}}.
\e
\end{cor}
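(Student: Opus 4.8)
The plan is to obtain \eqref{eq10a:sec2} as a direct specialisation of the resolvent-set estimate \eqref{eq9b:sec2} to the case of a simple spectrum, followed by an extension from the resolvent set to the whole plane via \eqref{eq10b:sec2}. First I would record what simplicity buys: if every eigenvalue $\la_n$ is algebraically simple, then its multiplicity and its index both equal one, so $m_n=1$ and $l_n=1$ for all $n$, and hence $l_{n_0}=\max\{l_n\colon n=1,\dots,N\}=1$. Each generalised eigenspace $\mathrm{Ker}(\la_n-K)^{l_n}=\mathrm{Ker}(\la_n-K)$ is then one-dimensional and, after normalisation, spanned by a single eigenvector; at the distinguished index $n_0$ this vector is the $u_1$ of the statement.

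Second, I would feed these values into \eqref{eq9b:sec2}. Because $l_{n_0}=1$ and $m_{n_0}=1$, the exponent $1/l_{n_0}$ in the definition \eqref{eq9c:sec2} of $\beta$ equals one and the maximum runs over the single basis vector $u_1$, so $\beta(N,l_{n_0})$ reduces to $\norm{Ku_1-K_Nu_1}$. The preceding theorem then reads, for every $z$ in the resolvent set,
\bee
\abs{d_p(z)-d_{Np}(z)}\le c\,\norm{Ku_1-K_Nu_1}\,\norm{K}^p_p .
\ee
Since $K\in\mathcal{J}_p(\mathcal{H})$, the factor $\norm{K}^p_p=\mbox{tr}(\abs{K}^p)$ is a finite constant independent of $N$ and $z$ and may be absorbed into $c$, which already yields \eqref{eq10a:sec2} throughout $\rho(K)$.

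Third, I would lift the restriction that $z$ lie in the resolvent set. The remaining points are the (reciprocal) eigenvalues, each of index $l=1\le l_{n_0}$ by simplicity; for these the comparison \eqref{eq10b:sec2} bounds the error by its value at a resolvent point, so the same constant $c$ applies, while at $z=0$ both determinants equal $1$. This extends \eqref{eq10a:sec2} to all $z\in\C$.

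The step I expect to demand the most care is justifying why the \emph{single} vector $u_1$---rather than the whole family of eigenvectors entering the trace estimate of the previous proof---controls the bound, and this is precisely where the hypothesis $\mathrm{Im}(u_1)=0$ (or $\mathrm{Re}(u_1)=0$) enters. I would argue that, for a simple spectrum, the reality (or pure imaginarity) of the eigenvector makes $u_1$ the canonical normalised generator of the one-dimensional space $\mathrm{Ker}(\la_{n_0}-K)$, so that $\beta(N,l_{n_0})$ is unambiguously $\norm{Ku_1-K_Nu_1}$ and the per-eigenvalue errors $\norm{Ku_n-K_Nu_n}$ occurring in the trace bound are all dominated by this quantity once $N$ is large. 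Checking that this domination is uniform in $n$, so that the constant $c$ is genuinely free of $N$ and $z$, is the delicate point; the rest is bookkeeping built on the collective-compactness convergence $\norm{Ku_n-K_Nu_n}\to0$ furnished by Definition \ref{def1} and the results of \cite{KA67,GV67,KA75}.
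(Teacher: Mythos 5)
Your proposal is correct and takes essentially the same route as the paper: simplicity gives $m_n=l_n=1$, hence $\beta(N,1)=\norm{Ku_{1}-K_Nu_{1}}$, which is substituted into the resolvent-set bound \eqref{eq9b:sec2} (with $\norm{K}^p_p$ absorbed into $c$), and \eqref{eq10b:sec2} is then invoked to cover the reciprocal eigenvalues and thus all of $\C$. The paper's own proof is merely a terser version of this argument, and the delicate point you flag at the end --- the uniformity in $n$ of the domination by the single quantity $\norm{Ku_{1}-K_Nu_{1}}$, and where the reality hypothesis on $u_{1}$ actually enters --- is likewise left unaddressed there.
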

Indeed, since the eigenvalues $\la_n$ of $K$ are simple then their algebraic multiplicities $m_n$ as well as their indexes $l_n$ are equal to $1$, for all $n\ge1$ \cite{CDM00}. It then follows that 
\bee
\beta(N,1)=\norm{Ku_1-K_Nu_1},
\ee 
and also from \eqref{eq10b:sec2} that
\bee
\abs{d_p(z)-d_{Np}(z)}=c\abs{d_p(z_n)-d_{Np}(z_n)} 
\ee
where $z^{-1}_n\in\sig(K)$. Hence, equation \eqref{eq10a:sec2} follows. In other words,  Corollary~\ref{cor1} tells us that the rate of convergence in computing the determinants is proportionally the same for any $z\in\C$, that is for $z^{-1}\in\sig(K)$ or $z\in\rho(K)$ (cf.~Figure~\ref{fig:1}). 

\begin{rem}\label{re3.5}
Suppose for example that $K$ in Corollary~\ref{cor1} is associated with a kernel which has a jump discontinuity on the diagonal in the first derivative. Then for a given quadrature method which does ignore the discontinuity region, the rate of convergence is the same for all $z\in\C$ like in Corollary~\ref{cor1}. However if the method takes into account the discontinuity then we might expect better convergence for $z^{-1}_0\in\sig(K)$ than for $z\in\rho(K)$ (cf.~Figure \ref{fig:1}).
\end{rem}
\section{Numerical Results}
In this section we numerically evaluate the $p$-modified Fredholm determinants associated with the operator $K$ given by \eqref{eq7:sec1}, where its corresponding kernel 
\be\label{eq1:sec4}
k(x,y)=
\begin{cases}
k_1(x,y),& a\le y\le x\\
k_2(x,y),&x<y\le b
\end{cases}
\e
is whether discontinuous along the diagonal or just continuous, and satisfies Hypothesis \ref{guess1}. For the numerical approach, we essentially follow the method described in \cite{KKR02}. To this end, let us now give a brief descprition of the method but more details are found in \cite{KKR02}. We assume that $k_1$ and $k_2$ can be approximated by Chebyshev polynomials $T_n(x)$, that is
\begin{subequations}
\begin{align*}
k_1(x_m,y)=&\sum^{N}_{n=1}a_{mn}T_n(y)\\
k_2(x_m,y)=&\sum^{N}_{n=1}\tilde{a}_{mn}T_n(y),
\end{align*}
\end{subequations}
for a fixed $x_m\in [-1,1]$ and $m=1,\cdots,N$. Then an approximation of the eigenvalue problem \eqref{eq1aa:sec2} is given by (cf.~\cite{KKR02})
\be\label{eq3:sec4}
\left(I_N+z\frac{b-a}{2}\left(CS_rC^{-1}\circ K_1+CS_lC^{-1}\circ K_2\right)\right)\bar{u}=0,
\e
where $\circ$ denotes the pointwise multiplication, $C=T_n(x_m),~K_1=k_1(x_m,x_n),$\\$K_2=k_2(x_m,x_n),~S_r$ and $S_l$ are right and left spectral integration matrix, respectively, and $\bar{u}=(u(x_1),\dots,u(x_N))^T$.  Hence
\bee 
d_{Np}(z)=\sideset{}{_N}\det(I_N+zK_N),
\ee
where 
\bee
K_Nu(x)=\sum^N_{j=1}\left(w_jk_1(x,y_j)+\al_jk_2(x,y_j)\right)u(y_j)
\ee
with $\{K_N\}_{N\ge1}$, a collectively compact operator. One of the advantages of using Chebyshev polynomials lie on the fact that the coefficients in the expansion of an indefinite integral can be easily obtain from that of the series expansion of the integrand, in terms of the Chebyshev polynomial~\cite{CC60}. Therefore with this property, the Chebyshev polynomials are extremely useful for integral equations associated with discontinuous kernel function along the diagonal. In all the examples below, we use the Nystr\"om--Clenshaw--Curtis and  the Nystr\"om--Gauss--Legendre referred to NCC as in \cite{KKR02} and NGL, respectively. The latter method does not take into account the discontinuity of the kernel function $k$ however the NCC does.
\begin{exmp}\label{exp:1}
For our first example, we consider the problem studied in \cite{FB10}, i.e
\bee 
u(x)=zKu(x)=\int^1_0k(x,y)u(y)\mathrm{d}y,~x\in [0,1],~u\in\mathcal{H}=L^2(0,1),
\ee
where
\bee 
k(x,y)=
\begin{cases}
 x(1-y),&x\le y\\
y(1-x),&y<x.
\end{cases}
\ee
Our aim for this problem is not to compute the Fredholm determinant but to emphasize Corollary~\ref{cor1} and Remark \ref{re3.5}. The operator $K$, for this example, is compact, self-adjoint and the corresponding set of operators $\{K_N\}_{n\ge1}$ is collectively compact since $k$ is continuous (cf.~\cite{ANS71}). Observe that eigenvalues of $K$, given by
\bee
z^{-1}_n=\la_n=\frac{1}{\pi^2n^2},~n=1,2,\dots,
\ee
are simple. Accordingly, Corollary~\ref{cor1} tells us that the rate of convergence is proportionally the same for all nonzero $z$, and it is of order $O(N^{-2})$ (cf.~\cite{FB10}). To confirm this, we display in Fig~\ref{fig:1} the error $\abs{d(z)-d_N(z)}$ computed by the NGL and the NCC method at the eigenvalue, $z_1=\la^{-1}_1=\pi^2$ and at $z=1$. Recall that at the roots $z_n$, the $p$-modified Fredholm determinants $d_p(z_n)=0$ so the error is just $\abs{d_{Np}(z_n)}$.
\begin{figure}[!h]
\centering
\includegraphics[height=6cm, width=6cm]{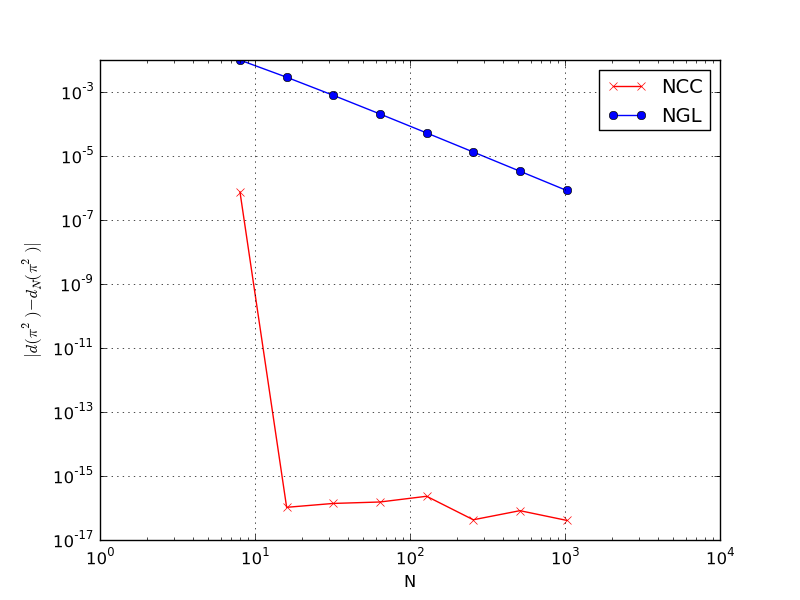}
\includegraphics[height=6cm, width=6cm]{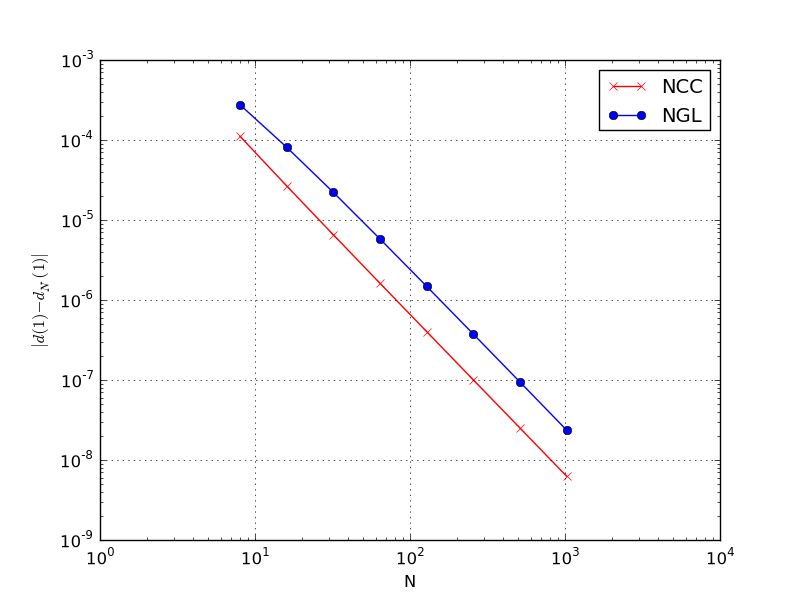}
\caption{$\log_{10}$ of the error evaluated at $z_1=\la^{-1}_1=\pi^2$ (left) and at $z=1$ (right) of Example~\ref{exp:1}.}
\label{fig:1}
\end{figure}
\end{exmp}

\begin{exmp}\label{exp:2}
Our aim for the second example is to show the different rates of convergence shown in \eqref{eq10b:sec2}, for $z\in\rho(K)$ and $z^{-1}\in\sig(K)$. The kernel function associated with the integral operator $K$ is given by
\bee
k(x,y)=\frac{1}{12}-\frac{1}{2}\abs{x-y}+\frac{1}{2}(x-y)^2,\quad x,y\in[0,1].
\ee
The operator $K$ is self-adjoint with eigenvalues given by (cf.~\cite{HH73})
\bee
z^{-1}_n=\la_n=\frac{1}{4n^2\pi^2},~n=1,2,\dotsc.
\ee
Since $\sum^\infty_{n=1}\abs{\la_n}<\infty$ then the operator $K$ is of trace class and its corresponding Fredholm determinant is
\bee
d(z)=\prod^\infty_{n=1}\left(1-\frac{z}{4n^2\pi^2}\right)=\fract{2-2\cos(\sqrt{z})}{z}.
\ee
Each eigenvalue $\la_n$ has an algebraic multiplicity $m_n=2$, and its corresponding index $l_n=1$ for all $n\ge1$, since $K$ is self-adjoint. Therefore following the same line of arguments in \cite{FB10} which led to an error of order $O(N^{-2})$ in Example \ref{exp:1}, we can conclude that 
\begin{align*}
\beta(N,1)&=\max\{\norm{Ku_{1}-K_Nu_{1}},\norm{Ku_{2}-K_Nu_{2}}\}\\
&=O(N^{-2})
\end{align*}
where $u_{1}(x)=\sqrt{2}\cos(2n\pi x)$ and $u_{2}(x)=\sqrt{2}\sin(2n\pi x)$ (cf.~\cite{HH73}). However,
\begin{figure}[!h]
\centering
\includegraphics[height=6cm, width=6cm]{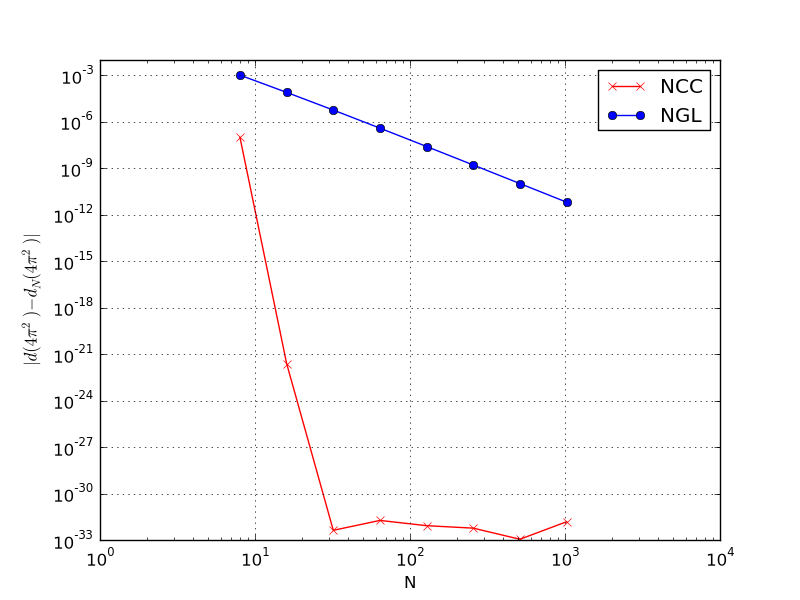}
\includegraphics[height=6cm, width=6cm]{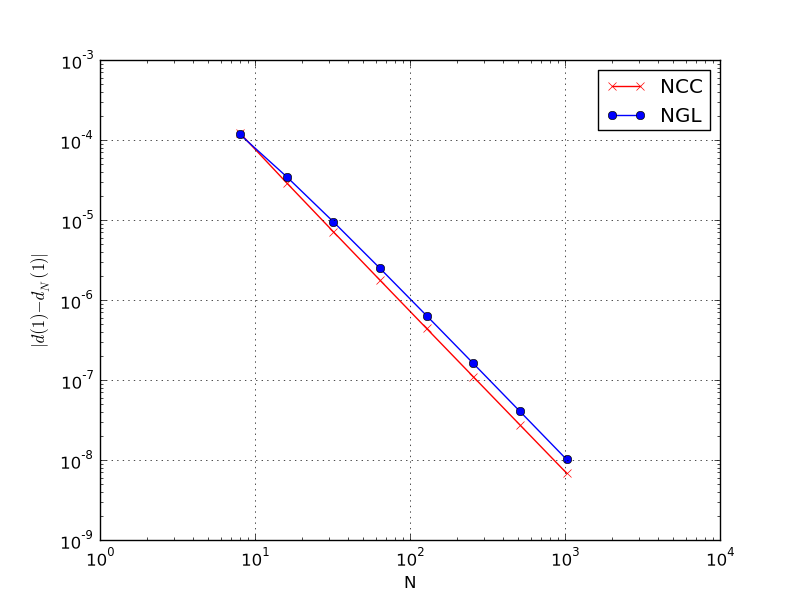}
\caption{$\log_{10}$ of the error evaluated at $z_1=\la^{-1}_1=4\pi^2$ (left) and at $z=1$ (right) for Example~\ref{exp:2}.}
\label{fig:2}
\end{figure}
numerical observation of the NGL method shows that the rate of convergence at the eigenvalue $z_{1}=4\pi^2$ is $O(N^{-4})$ (see Figure~\ref{fig:2} (left)), whereas at $z=1$ it is proportional to $O(N^{-2})$ which is also the rate for the NCC method (see Figure~\ref{fig:2} (right)).
\end{exmp}
\begin{exmp}\label{exp:3}
Let consider
\be\label{eq3a:sec4}
u(x)-z\int^1_{-1}k(x,y)u(y)\mathrm{d}y=0,~x\in [-1,1],~u\in\mathcal{H}=L^2(-1,1),
\e
where
\be\label{eq4:sec4}
k(x,y)=
\begin{cases}
 1,&y\le x\\
-1,&x<y.
\end{cases}
\e
Note that $k(x,y)$ is of the form given in \eqref{eq2aa:sec2} where $g(x,y)=\mbox{sgn}(x-y)$ and $h(x,y)=1$. The integral operator $K$ associated with the kernel function \eqref{eq4:sec4} is Hilbert--Schmidt operator since $\norm{k}_{L^2((-1,1)^2)}<\infty$, and it is a normal operator as well, that is $K^*K=KK^*$. The coefficients $\al^{(2)}_n(K)$ in the expression of the Hilbert--Schmidt determinant $d_2(z)$ are easily computed via \eqref{eq8:sec1} by setting to zero the diagonal elements (cf.\cite{DH04}), and are given explicitly, for $k=0,1,2,\dots,$ by
\bee
\al^{(2)}_n(K)=\frac{1}{n!}
\begin{cases}
 2^n,&n=2k\\
0,&n=2k+1.
\end{cases}
\ee
Hence the Hilbert--Schmidt determinant is 
\bee
d_2(z)=\sideset{}{_2}\det(I_\mathcal{H}-zK)=\cosh(2z)
\ee
with purely imaginary zeros $z_k$ satisfying
\be\label{eq5:sec4}
z^{-1}_n=\la_n=-i\frac{4}{\pi}\fract{1}{2n+1},\quad n=0,1,2,\dots~.
\e
The eigenfunctions $u_n$ associated with the eigenvalues $\la_n$ are
\be\label{eigfn}
u_n(x)=\frac{1}{\sqrt{2}}\exp\left(i\frac{\pi}{2}(2n+1)x\right). 
\e
From \eqref{eq5:sec4}, it is clear that the trace of $K$ is divergent but 
\be\label{eq6:sec4}
\mbox{tr}(K^2)=-2\fract{16}{\pi^2}\sum^\infty_{n=0}\fract{1}{(2n+1)^2}=-4. 
\e
The factor of $2$ before the sum in \eqref{eq6:sec4} takes into account the fact that each eigenvalue of the operator $K$ must be counted twice since $d_2(z_n)=d_2(\bar{z}_n)=0$. Observe that \eqref{eq6:sec4} can also be written as (cf.~\cite{FS65,HH73})
\bee
\mbox{tr}(K^2)=-4=\int^1_{-1}\int^1_{-1}k(x,y)k(y,x)\mathrm{d}x\mathrm{d}y.
\ee
From \eqref{eq4a:sec1}, we have 
\be\label{eq4a:sec4}
d_2(z)=\prod^\infty_{n=0}\left[(1-i\fract{4z}{\pi(2n+1)})\exp(i\fract{4z}{\pi(2n+1)})\right]=\cosh(2z).
\e
As a numerical proof of Theorem \ref{theom1}, we plot in Figure \ref{fig:3} the approximate determinant $d_{2N}(z)$ and the original $d_2(z)$ for $\abs{z}\le1$. In Figure~\ref{fig:3} (left), we plot the imaginary part of the Hilbert--Schmidt determinant and in Figure~\ref{fig:3} (right) the real part. One obviously note that as $N$ gets bigger, $d_{2N}(z)$ converge to $d_2(z)$ at the rate proportional to $O(N^{-1})$ due to the discontinuity in $k(x,y)$.
\begin{figure}[!h]
\centering
\includegraphics[height=6cm, width=6cm]{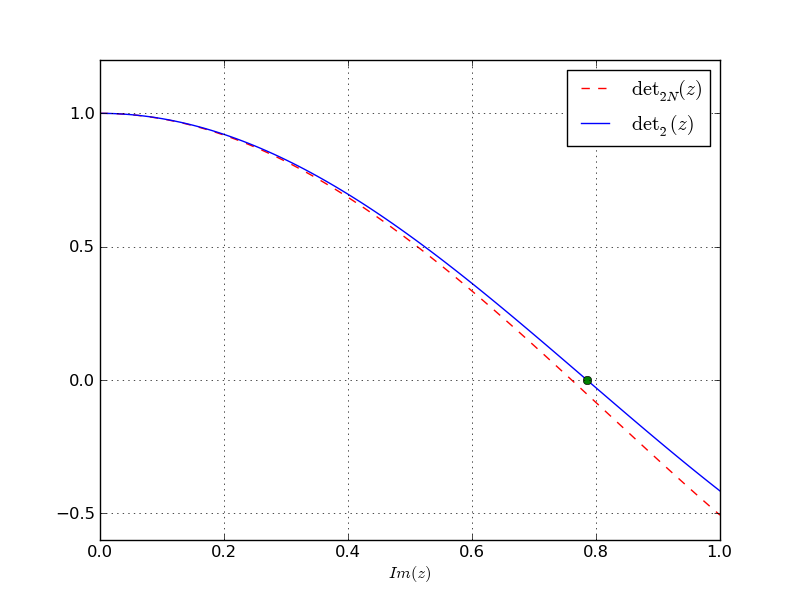}
\includegraphics[height=6cm, width=6cm]{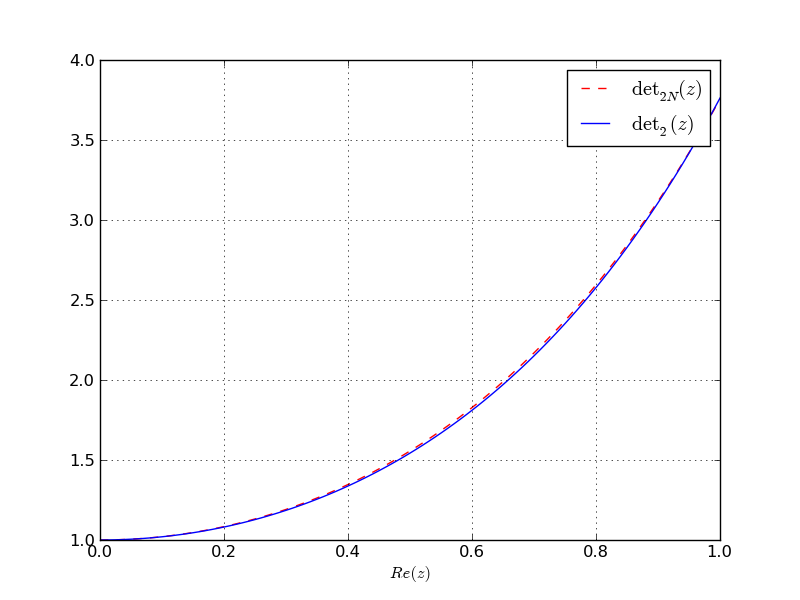}
\caption{Numerical approximation of $d_2(z)$ by rectangular rule for $N=30$. Left panel is the imaginary part and right panel is the real part.}
\label{fig:3}
\end{figure}
Again to confirm that \eqref{eq10b:sec2} hold, we plot the error $\abs{d_2(z)-d_{2N}(z)}$ at $z=1\in\rho(K)$ and $z_0=\la^{-1}_0=i\pi/4$. Since $K$ is a normal operator then the index of the eigenvalues $l_n=1$ for all $n\ge1$. Although, the eigenvalues of $K$ are simple but as the eigenfunctions \eqref{eigfn} are complex then Corollary \ref{cor1} does not hold for this example.
Numerically, we observe that the NGL method (dotted line) produces a rate of convergence proportional to $O(N^{-2})$ Figure~\ref{fig:4} (left) and $O(N^{-1})$ Figure~\ref{fig:4} (right), for $z_0=i\pi/4$ and $z=1$, respectively.
\begin{figure}[!h]
\centering
\includegraphics[height=6cm, width=6cm]{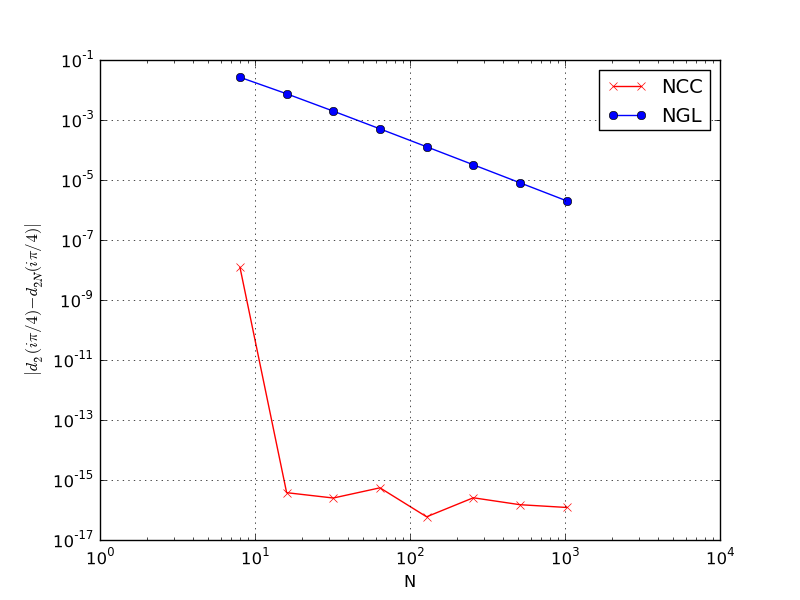}
\includegraphics[height=6cm, width=6cm]{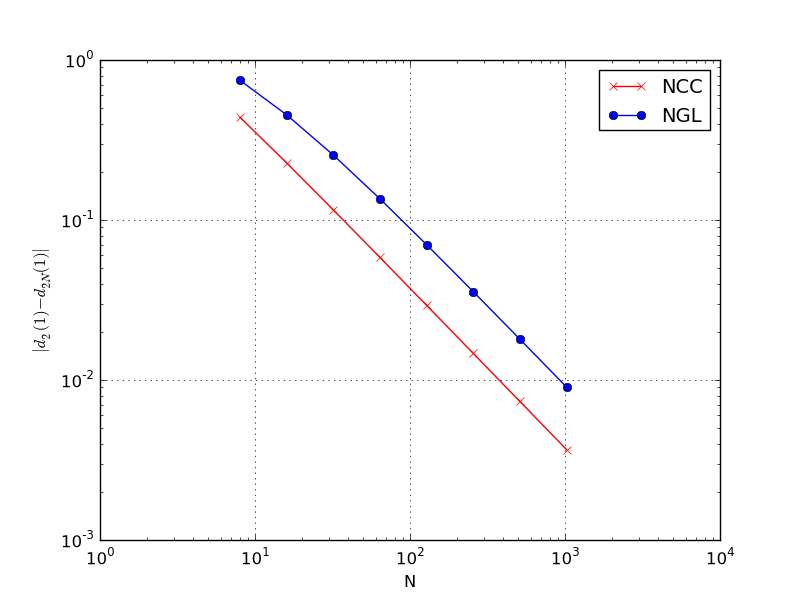}
\caption{$\log_{10}$ of the error evaluating at $z_1=\la^{-1}_1=i\pi/4$ (left) and at $z=1$ (right) for Example~\ref{exp:3}.}
\label{fig:4}
\end{figure}\\
To implement the strategy mentioned in the paragraph preceding Theorem \ref{theom1a}, we compute the iterated integral operator. Observe that the $2$-iterated kernel $k_2(x,y)$ of \eqref{eq4:sec4} is continuous Hermitian everywhere for $(x,y)\in [-1,1]\times[-1,1]$ and it is given by
\bee
k_2(x,y)=-2+2\abs{x-y}.
\ee
The integral operator $K^2$ associated with kernel $k_2$ is of trace class since $k_2$ satisfies condition of \cite[Theorem 8.2, Chap IV]{ISN00} with $\al>1/2$. Therefore its trace is \bee
\mbox{tr}(K^2)=\int^1_{-1}k_2(x,x)\mathrm{d}x=-4,
\ee
and its Fredholm determinant is 
\be\label{eq5aa:sec4}
d(z)=\det(I_\mathcal{H}-z^2K^2)=\frac{1}{2}(\cosh(4z)+1).
\e
Note that $d_2(z)$ satisfy the conditions given in Theorem \ref{theom1a}. Hence we must have that 
\be\label{eq5a:sec4}
(d_2(z))^2=d(z).
\e
Indeed the above equality holds since 
\bee
(\cosh(2z))^2=\frac{1}{2}(\cosh(4z)+1).
\ee
Note that due to the discontinuity in the kernel function \eqref{eq4:sec4}, the approximate Hilbert--Schmidt determinant $d_{2N}(z)$ converges slowly to $d_2(z)$. However given the relation \eqref{eq5a:sec4} and that $k_2$ is continuous, one can improve the order of convergence in approximating $d_2(z)$ by approximating $\det(I_{\mathcal{H}}-z^2K^2)$ instead. Indeed, $k_2$ has similar properties as the kernel in Example \ref{exp:2}. Therefore, the same behaviour of the error can be expected as shown in Figure \ref{fig:2}.
\begin{rem}
 The same behaviour of the error holds for the self-adjoint operator $iK$ with eigenvalues $\pm i\la_n$.
\end{rem}

\end{exmp}
\begin{exmp}\label{exp:4}
 We consider equation \eqref{eq3a:sec4} with kernel given by
 \bee 
 k(x,y)=\frac{1}{\abs{x-y}^\alpha}.
 \ee
 The kernel function $k(x,y)$ is of the form given in \eqref{eq2aa:sec2} with $h(x,y)=1$. Hence the integral operator $K$ associated with the above kernel is compact, self-adjoint \cite{KA67} and positive. In particular, for $0<\alpha<1/2$ the integral operator $K$ is Hilbert--Schmidt (cf.\cite{HH73, CB77}). Therefore, the $2$-modified Fredholm determinant is computed as in Example \ref{exp:3} by substituting zero in the kernel function for $x=y$ \cite{DH04}. Then we shall focus on the case $\al=1/2$. For this case, the integral operator $K$ is not Hilbert--Schmidt since $\norm{k}_{L^2((-1,1)^2)}$ is unbounded hence $K\in\mathcal{J}_p(\mathcal{H})$ for $p\ge3$. Since $K$ is self-adjoint and positive operator so is the integral operator $K^3$. Moreover, the $3$-iterated kernel $k_3(x,y)$ is continuous (cf.\cite{CB77}) it then follows that the integral operator $K^3$ is of trace class (cf.~\cite[Theorem 2.12]{BS05}) with
 \bee
 \mbox{tr}(K^3)=\int^{1}_{-1}k_3(x,x)\mathrm{d}x=\norm{K^3}_1<\infty.
 \ee
 This implies since $K$ is self-adjoint and positive operator that
  \bee
 \mbox{tr}(\abs{K}^3)=(\mbox{tr}(K^3))^{1/3}<\infty.
 \ee
 Hence $K$ is in $\mathcal{J}_3(\mathcal{H})$. For the numerical computation of the $3$-modified Fredholm determinant $d_3(z)=\sideset{}{_3}\det(I_{\mathcal{H}}-zK)$, we need to compute numerically the eigenvalues of $K$ and form the finite dimensional version of equation \eqref{eq4a:sec1}. However, given Theorem \ref{theom1a} we are not required to, we only need to have an explicit expression of the $2$-iterated kernel $k_2(x,y)$ and set its diagonal values to zero. Using Maple, the $2$-iterated kernel $k_2(x,y)$ is
 \bee
 k_2(x,y)=
 \begin{cases}
 -\ln\left(2-y-x-2\sqrt{(1-y)(1-x)}\right)+\pi\\
\quad+\ln\left(2+y+x+2\sqrt{(1+y)(1+x) }\right),&x<y\\
-\ln\left(2+y+x-2\sqrt{(1+y)(1+x)}\right)+\pi\\
\quad+\ln\left(2-y-x+2\sqrt{(1-y)(1-x)}\right),&x>y
 \end{cases}
 \ee
 To confirm Theorem \ref{theom1a} for $K\in\mathcal{J}_3(\mathcal{H})$, we display in Figure \ref{fig:5} (left) the plot of $d_{3N}(z)=\sideset{}{_{3N}}\det(I_N-zK_N)$ computed using the five eigenvalues of largest modulus and $d_{2N}(z)=\sideset{}{_{2N}}\det(I_N-z^2K^2_N)$ computed by the rectangular rule. In Figure \ref{fig:5} (right), we plot $d_{3N}(z)d_{3N}(-z)$  and $d_{2N}(z)$.
\begin{figure}[!h]
\centering
\includegraphics[height=6cm, width=6cm]{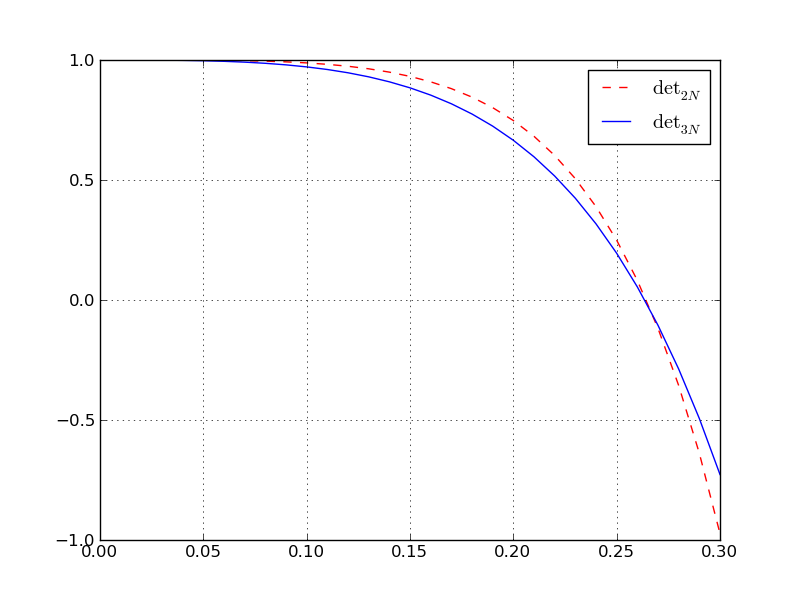}
\includegraphics[height=6cm, width=6cm]{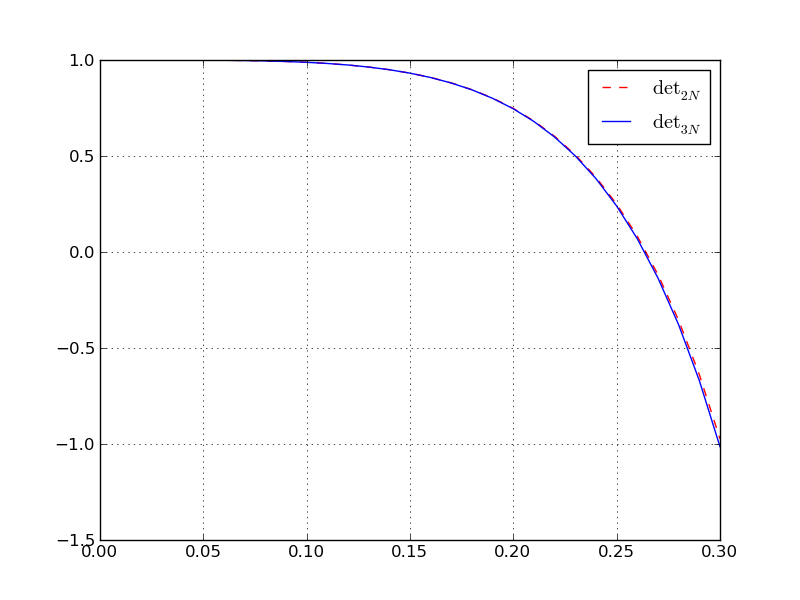}
\caption{ Numerical approximation of $d_3(z)$ (solid line) and $d_2(z)$ (left), and $d_3(z)d_3(-z)$ and $d_2(z)$ (right).}
\label{fig:5}
\end{figure}
Since $K$ is positive operator then its eigenvalues are positive. Therefore from Theorem \ref{theom1a}, we have for real $z\ge0$
 \bee
 d_{3N}(z)=c_N(z)d_{2N}(z),
 \ee
 where $c_N(z)=1/d_{3N}(-z)$ is a nonzero entire function. From Figure \ref{fig:5}, we observe that the smallest root $z_1=\la^{-1}_1$ of $d_{3N}(z)$ is simple, and together with the self-adjointness property of $K$ we conclude that eigenvalues of $K$ are simple. Moreover, we have observed numerically that the eigenfunctions of $K$ are real. Hence the error in evaluating the $3$-modified Fredholm determinant satisfies Corollary \ref{cor1}, that is for a fixed real $z>0$ we have
 \begin{align}
 \abs{d_{3}(z)-d_{3N}(z)}&=\abs{c(z)d_2(z)-c_{N}d_{2N}(z)}\label{eq6a:sec4}\\
&\le c\norm{Ku_1-K_Nu_1}\nonumber
\end{align}
 where $c(z)=1/d_3(-z)$. Since the error is proportionally the same then we can bound the above error with the error in approximating the $2$-iterated operator $K^2$, that is
 \begin{align*}
 \abs{d_{3}(z)-d_{3N}(z)}&\le \abs{c(z)}\abs{d_2(z)-d_{2N}(z)}\\
&\le c\abs{c(z)}\norm{K^2u_1-K^2_Nu_1}.
 \end{align*}
 It then follows from the uniform convergence of the Hilbert--Schmidt determinant $\sideset{}{_2}\det$ that equation \eqref{eq6a:sec4} convergences uniformly too. For numerical proof (cf.~Figure \ref{fig:6}), we plot the error $\abs{d_2(z)-d_{2N}(z)}$ using the rectangular rule evaluated at $z=0.1$ which according to the graph is of order $O(N^{-1}\log(N))$.
\begin{figure}[!h]
\centering
\includegraphics[height=6cm, width=6cm]{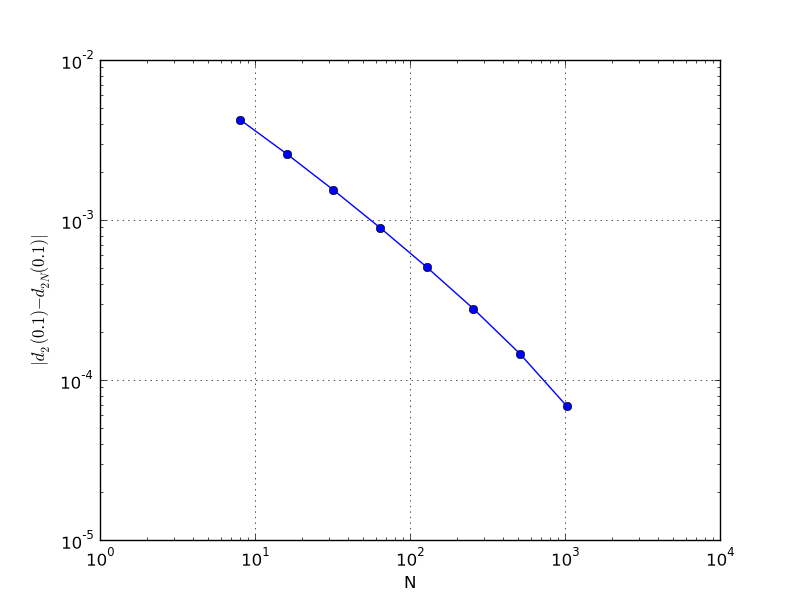}
\caption{ $\log_{10}$ of the error $\abs{d_2(z)-d_{2N}(z)}$ where $z=0.1$.}
\label{fig:6}
\end{figure}

 
\end{exmp}

\section{Conclusion}
In this paper, we have given theoretical and numerical results concerning the approximation error $(d_{p}-d_{Np})$, where $d_p$ is the $p$-modified Fredholm determinants and $d_{Np}$ is the finite dimensional determinants corresponding to $d_p$. These results are as follows; first, we have shown that the approximation error $(d_{p}-d_{Np})$ is uniform in a bounded domain. Second, we have given the rate of convergence when evaluating $(d_{p}-d_{Np})$ at an eigenvalue or at a point of the resolvent set. As a consequence, we have observed that numerical evaluation of the $p$-modified Fredholm determinants is nothing else than an interpolation in which the interpolation points are the eigenvalues of the operator $K$. From the well known result of interpolation theory which states that if the interpolated function is continuous then the error is uniform in a bounded domain, this confirms the uniform convergence obtained in Theorem~\ref{theom1}. 
Although we dealt with a finite domain of $\R$, an extension of the present analysis to $X\subseteq\R^n$ is possible. This is of course under the hypothesis that the kernel $k(x,y)$ is such that the set $\{K_N\}_{N\ge1}$ is collectively compact. However, numerically we believe that for $L^2$-kernel function $k(x,y)$ on the real line $\R$ and defined as in \eqref{eq1:sec4}, the NCC method will not be effective. This is due to the unbounded nature of the kernel functions $k_1$ and $k_2$. 
\section*{Acknowledgements}
It is a pleasure to thank Dr S.J.A Malham for suggesting the topic of this paper, and for many helpful discussions on its content. I would also like to thank Dr L. Boulton for useful discussions. Finally, I Would like to extend all my gratitude to the Numerical Algorithms and Intelligent Software Centre funded by the UK EPSRC grant EP/G036136 and the Scottish Funding Council for supporting this work.

\end{document}